\newtheorem{Thm}{Theorem}[section]
\newtheorem{Lem}[Thm]{Lemma}
\newtheorem{corollary}[Thm]{Corollary}
\theoremstyle{definition}
\newtheorem{remark}[Thm]{Remark}
\numberwithin{equation}{section}
\newcommand{\define}[1]{{\em #1\/}}
\newcommand{\vp}{\varphi}
\newcommand{\cA}{{\mathcal A}}
\newcommand{\cB}{{\mathcal B}}
\newcommand{\field}[1]{\mathbb{#1}}
\newcommand{\N}{\field{N}}          		
\newcommand{\R}{\field{R}}          		
\newcommand{\Sph}{\field{S}}        		
\newcommand{\C}{\field{C}}          		
\newcommand{\hyp}{\field{H}}        		
\newcommand{\abs}[1]{\lvert #1 \rvert}
\newcommand{\loc}{{\rm loc}}
\DeclareMathOperator{\dist}{dist}
\DeclareMathOperator\diver{div}
\DeclareMathOperator\Hess{Hess}
\DeclareMathOperator\spt{spt}
\DeclareMathOperator\sect{Sect}
\DeclareMathOperator\Int{Int}
\begin{document}

\title[Nonsolvability of the asymptotic Dirichlet problem]{Nonsolvability of the asymptotic Dirichlet problems for some quasilinear elliptic PDEs  on Hadamard manifolds}

\author[Ilkka Holopainen]{Ilkka Holopainen}
\address{Department of Mathematics and Statistics, P.O. Box 68,
00014 University of Helsinki, Finland.}
\email{ilkka.holopainen@helsinki.fi}
\author[Jaime B. Ripoll]{Jaime B. Ripoll}
\address{UFRGS, Instituto de Matem\'atica, Av. Bento Goncalves 9500, 91540-000 Porto Alegre-RS, Brasil.}
\email{jaime.ripoll@ufrgs.br}
\thanks{I.H. supported by the Academy of Finland, project 252293; J.R. supported by the CNPq (Brazil) project 302955/2011-9.}

\subjclass[2000]{58J32, 53C21, 31C45}

\keywords{minimal graph equation, Dirichlet problem, Hadamard manifold}

\begin{abstract}
We show, by modifying Borb\'ely's example, that there are $3$-dimen\-sional Cartan-Hadamard manifolds $M$, with 
sectional curvatures $\le -1$, such that the asymptotic Dirichlet problem for a class of quasilinear elliptic PDEs, including the minimal graph equation, is not solvable.
\end{abstract}

\maketitle

\bibliographystyle{acm}


\section{Introduction}\label{sec:intro}
In this paper we construct a $3$-dimensional Cartan-Hadamard manifold of sectional curvatures $\le -1$ where the asymptotic Dirichlet problem is not solvable with any continuous non-constant boundary data for a large class of equations
\begin{equation}\label{Q_equ}
Q[u]:= \diver\cA(\abs{\nabla u}^{2})\nabla u
\end{equation}
including, in particular, the minimal graph equation
\begin{equation}\label{min_eqn}
\diver\dfrac{\nabla u}{\sqrt{1+\abs{\nabla u}^{2}}}=0.
\end{equation}
Examples of such manifolds were earlier constructed for the usual Laplace equation by Ancona \cite{ancrevista} and Borb\'ely
\cite{Bor} and for the $p$-Laplace equation by Holopainen \cite{H_ns}, whereas no counterexamples, with $\sect \le -1$, were known for the minimal graph equation \eqref{min_eqn}.
Recall that a Cartan-Hadamard manifold $M$ is
a complete, connected and simply connected Riemannian $n$-manifold, $n\ge 2$,
of non-positive sectional curvature. By the Cartan-Hadamard theorem, the
exponential map $\exp_{o}\colon T_{o}M\to M$ is a diffeomorphism for
every point $o\in M$. In particular, $M$ is diffeomorphic to $\R^n$.
It is well-known that $M$ can be compactified by adding a natural geometric boundary, called the
\define{sphere at infinity} (or the \define{boundary at infinity}) and denoted
by $M(\infty)$, so that the resulting space $\bar M=M\cup M(\infty)$ equipped with
the \define{cone topology} will be homeomorphic to a closed Euclidean ball; see \cite{EO}.
The \define{Dirichlet problem at infinity} (or the \define{asymptotic Dirichlet problem}) on a Cartan-Hadamard manifold $M$ 
for the operator $Q$ is then the following: Given a continuous function $h$ on $M(\infty)$ does there exist a (unique) 
function $u\in C(\bar M)$ such that $Q[u]=0$ on $M$ and $u\vert M(\infty)=h$?

We assume that $\cA\colon (0,\infty)\to [0,\infty)$ is a smooth function such that 
\begin{equation}\label{Agrowth}
\cA(t)\le A_{0}t^{(p-2)/2}
\end{equation}
for all $t>0$, with some constants $A_{0}>0$ and $p\ge 1$, and that $\cB:=\cA'/\cA$ satisfies
\begin{equation}\label{Bgrowth}
-\frac{1}{2t} < \cB(t)\le \frac{B_{0}}{t}
\end{equation}
for all $t>0$ with some constant $B_{0}>-1/2$. Furthermore, we assume that 
\begin{equation}\label{A(0)}
t\cA(t^2)\to 0\text{ as }t\to 0+,
\end{equation}
and therefore we interprete $\cA(\abs{X}^2)X$ as a zero vector whenever $X$ is a zero vector.  
The equation $Q[u]=0$ is interpreted in weak sense. More precisely, a function $u$ is a solution to the equation
$Q[u]=0$ in an open set $\Omega\subset M$ if it belongs to the local Sobolev space
$W^{1,p}_{\loc}(\Omega)$ and
\begin{equation}\label{Qsol}
\int_{\Omega}\bigl\langle \cA\bigl(\abs{\nabla u}^{2}\bigr)\nabla u,\nabla\varphi\bigr\rangle dm =0
\end{equation}
for every $\varphi\in C^{\infty}_{0}(\Omega)$. Such function $u$ will be called a \define{$Q$-solution} in $\Omega$. 
Furthermore, we say that a function $u\in W^{1,p}_{\loc}(\Omega)$ is a \define{$Q$-subsolution} in $\Omega$ if 
$Q[u]\ge 0$ weakly in $\Omega$, that is 
\begin{equation}\label{Qsub}
\int_{\Omega}\bigl\langle\cA\bigl(\abs{\nabla u}^{2}\bigr)\nabla u,
\nabla\vp\bigr\rangle\,dm\le 0
\end{equation}
for every non-negative $\vp\in C^{\infty}_{0}(U)$. Similarly, a function $v\in W^{1,p}_{\loc}(U)$ is called a 
\define{$Q$-supersolution} in $\Omega$ if $-v$ is a $Q$-subsolution in $\Omega$.
Note that $u+c$ is a $Q$-solution (respectively, $Q$-subsolution, $Q$-supersolution) for every constant $c\in\R$ if 
$u$ is a $Q$-solution (respectively, $Q$-subsolution, $Q$-supersolution). Furthermore, $u$ and $-u$ are $Q$-solutions 
simultaneously. It follows from the growth condition \eqref{Agrowth} that test functions $\varphi$ in \eqref{Qsol} and 
\eqref{Qsub} can be taken from the class $W^{1,p}_{0}(\Omega)$ if $\abs{\nabla u}\in L^{p}(\Omega)$.

We call a relatively compact open set $\Omega\Subset M$ \define{$Q$-regular} if for any continuous boundary data 
$h\in C(\partial\Omega)$ there exists a unique $u\in C(\bar\Omega)$ which is a $Q$-solution in $\Omega$ and 
$u\vert \partial\Omega=h$. 
In addition to the growth conditions on $\cA$, we occasionally assume that
\begin{itemize}
\item[(A)] there is an exhaustion of $M$ by an increasing sequence of $Q$-regular domains 
$\Omega_{k}$, and that
\item[(B)] locally uniformly bounded sequences of continuous $Q$-solutions are compact in relatively compact subsets of $M$.
\end{itemize}

We want to emphasize that in this paper we do not study which operators satisfy the assumptions (A) and (B) above because
our primary motivation is a nonsolvability result for the asymptotic Dirichlet problem for the minimal graph equation
\eqref{min_eqn} which is known to fulfil all the conditions above. Indeed, 
\[
\cA(t)=\frac{1}{\sqrt{1+t}}\text{ and }
\cB(t)= -\frac{1}{2(1+t)}
\]
satisfy \eqref{A(0)} and growth conditions \eqref{Agrowth} and \eqref{Bgrowth} with $A_{0}=1$ and $B_{0}=0$, respectively. 
Furthermore, the condition (A) for the minimal graph equation follows from \cite[Theorem 2]{DLR} where the sets $\Omega_{k}$ can be chosen as 
geodesic balls $B(o,k)$ centered at a fixed point $o\in M$, and the condition (B) follows 
from \cite[Theorem 1.1]{Spruck} (see also \cite[Theorem 1]{DLR}). We also note that $u$
satisfies \eqref{min_eqn} if and only if 
$G:=\left\{  \left(  x,u(x)\right)\colon x\in\Omega\right\}  $ 
is a minimal hypersurface in the product space $M\times\mathbb{R}$. 

The class of equations considered here include also the usual Laplace-Beltrami equation 
($\cA(t)\equiv 1$ and $\cB(t)\equiv 0$) and, more generally, the $p$-Laplace equation 
$\diver\bigl(\abs{\nabla u}^{p-2}\nabla u\bigr)=0,\ 1<p<\infty,$ in which case 
\[
\cA(t)=t^{(p-2)/2}\text{ and } \cB(t)=\frac{p-2}{2t},
\]
and so $A_0=1$ and $B_0=(p-2)/2$.
It is well-known that the properties (A) and (B) above hold for the $p$-Laplace equation and that (weak) solutions of 
the $p$-Laplace equation have H\"older-continuous representatives, usually called $p$-harmonic functions; see 
\cite{HKM}.

The main result of this paper is the following nonsolvabity theorem whose main special case is stated separately in  
Corollary~\ref{cor:main_minimal}. We want to point out that the properties (A) and (B) are not needed in the part (a) below.
\begin{Thm}\label{thm:main0} 
There exists a $3$-dimensional Cartan-Hadamard manifold $M$ with sectional curvatures $\le -1$ such that
\begin{enumerate}
\item[(a)]
for any operator $Q$, with $\cA$ satisfying \eqref{Agrowth}, \eqref{Bgrowth}, and \eqref{A(0)}, the asymptotic Dirichlet problem for the 
equation $Q[u]=0$ is not solvable with any continuous non-constant boundary data on $M(\infty)$,
\item[(b)] there are non-constant bounded continuous $Q$-solutions on $M$ if, in addition, $Q$ satisfies the properties 
(A) and (B).
\end{enumerate} 
\end{Thm}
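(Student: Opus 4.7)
Following Borb\'ely's scheme \cite{Bor}, adapted as in \cite{H_ns}, I would build $M$ by starting with the hyperbolic $3$-space $\hyp^3$ and a geodesic ray $\gamma$ ending at a point $\theta_0$ of the sphere at infinity. In a tubular neighborhood of $\gamma$ the hyperbolic metric is conformally altered by a warping factor depending only on the arclength along $\gamma$, designed to squeeze the transverse horocyclic sections on a sequence of annular shells $A_k$ centered at points $p_k=\gamma(t_k)$ with $p_k\to\gamma(\infty)$. A careful choice of the warping profile, including its second derivatives at the gluing region, should yield a Cartan-Hadamard manifold with $\sect\le -1$ everywhere. The key geometric consequence of the squeezing is that every geodesic from a fixed base point $o\in M$ whose endpoint in $\bar M$ lies in $M(\infty)\setminus\{\gamma(\infty)\}$ eventually passes through each $A_k$.

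\textbf{Part (a).} Given a bounded $Q$-solution $u$ on $M$, the plan is to estimate the oscillation of $u$ across each shell $A_k$ and then sum the estimates. To that end I would construct explicit radial barriers $\phi_k(x)=f_k(\dist(x,\gamma))$ on a neighborhood of $A_k$ and verify, using \eqref{Agrowth}, \eqref{Bgrowth}, \eqref{A(0)} together with Hessian comparison under $\sect\le -1$, that $Q[\phi_k]\le 0$, so that $\phi_k$ is a $Q$-supersolution. The geometric squeezing ensures that $\phi_k$ varies by an amount $\ve_k$, with $\ve_k\to 0$, between the inner and outer faces of $A_k$. The comparison principle for $Q$-solutions (valid under \eqref{Bgrowth}) then controls the oscillation of $u$ on $A_k$ by a constant multiple of $\ve_k$. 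A telescoping argument yields $u(x)\to L$ for a single value $L$ as $x$ tends in $\bar M$ to any point of $M(\infty)\setminus\{\gamma(\infty)\}$. Hence a continuous boundary extension of $u$ must be constant on $M(\infty)\setminus\{\gamma(\infty)\}$, and by continuity at $\gamma(\infty)$, constant on all of $M(\infty)$.

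\textbf{Part (b).} Assuming (A) and (B), let $\Omega_k\Subset M$ be the given $Q$-regular exhaustion and fix distinct points $x_1,x_2\in\Omega_1$. Choose continuous boundary data $h_k\in C(\partial\Omega_k)$ with $0\le h_k\le 1$, and let $u_k\in C(\bar\Omega_k)$ be the unique $Q$-solution with $u_k|\partial\Omega_k=h_k$. The $h_k$ can be chosen so that a comparison with fixed $Q$-sub\slash super-solutions supported in $\bar\Omega_1$ produces a uniform gap $u_k(x_2)-u_k(x_1)\ge\delta>0$ independent of $k$. By (B) a subsequence converges locally uniformly to a bounded continuous $Q$-solution $u$ on $M$ with $u(x_2)-u(x_1)\ge\delta$, hence non-constant.

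\textbf{Main obstacle.} The principal difficulty lies in the construction: enforcing $\sect\le -1$ everywhere while preserving the funnel geometry required for (a) demands sharp estimates on the warping function and a delicate smooth gluing. The second main hurdle is producing the radial barriers $\phi_k$ uniformly for the whole class of operators $Q$, since only the one-sided structural conditions \eqref{Agrowth}-\eqref{Bgrowth} are available; the profile $f_k$ must be chosen so that $Q[\phi_k]\le 0$ can be verified from these conditions alone, without any explicit form of $\cA$.
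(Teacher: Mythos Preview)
Your outline diverges from the paper's argument at essentially every step, and in ways that leave real gaps.

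\textbf{Construction.} Borb\'ely's manifold is \emph{not} a conformal deformation of $\hyp^3$, nor does the warping depend only on arclength along the ray. In Fermi coordinates $(s,r,\vartheta)$ along the geodesic $L$ the paper keeps $dr^2+h^2(r)\,ds^2$ hyperbolic and modifies only the rotational coefficient, replacing $\sinh^2 r\,d\vartheta^2$ by $g^2(s,r)\,d\vartheta^2$ with $g$ depending on \emph{both} $s$ and $r$. There is no sequence of ``annular shells $A_k$'' through which every geodesic to $M(\infty)\setminus\{x_0\}$ must pass; that picture does not match the actual geometry, and your oscillation scheme is built on it.

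\textbf{Part (a).} The paper does not prove oscillation decay across shells, and with only the comparison principle (no Harnack inequality is assumed for general $Q$) it is unclear how you would get $\operatorname{osc}_{A_k}u\le C\ve_k$ from a single radial supersolution. The paper's mechanism is different and global: it constructs, for every $a\in\R$ and $c>0$, a continuous $Q$-\emph{subsolution} $\varphi_{a,c}$ on all of $M$ with $0\le\varphi_{a,c}\le c$, $\varphi_{a,c}|M(\infty)=c\,\chi_{\{x_0\}}$, $\lim_{y\to x}\varphi_{a,c}(y)=0$ for every $x\in M(\infty)\setminus\{x_0\}$, and the crucial limit $\lim_{a\to-\infty}\varphi_{a,c}(x)=c$ for every $x\in M$. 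Given a bounded $Q$-solution $h$ and a cone neighborhood $U$ of $x_0$, one sets $b=\inf_M h$, $B=\inf_{U\cap M}h$, $c=B-b$, and compares $h$ with $f_a=b+\varphi_{a,c}$. The boundary behavior of $\varphi_{a,c}$ gives $\liminf_{y\to x}(h-f_a)(y)\ge 0$ at every $x\in M(\infty)$, so the comparison principle yields $h\ge f_a$ on $M$ for each $a$; sending $a\to-\infty$ forces $h\ge B$ everywhere, hence $b=B$. Thus $\inf_M h=\inf_{U\cap M}h$ (and dually for $\sup$), which already rules out any non-constant continuous extension to $M(\infty)$. The $\varphi_{a,c}$ are not radial in $\dist(\cdot,L)$; they are constant on rotationally symmetric surfaces $S^a_s$ obtained as integral curves of a vector field $R+q_a(r)S$, and $q_a$ is built piecewise (five intervals) precisely so that the dominant term $g_r' h(\beta-q)/g$ in the explicit formula for $Q[\varphi]$ beats the remaining terms using only \eqref{Agrowth}--\eqref{A(0)}.

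\textbf{Part (b).} Your scheme of choosing ad hoc boundary data $h_k$ to force $u_k(x_2)-u_k(x_1)\ge\delta$ needs a fixed pair of global sub/super\-solutions separating $x_1$ and $x_2$; you have not produced one. The paper supplies exactly such a pair: the subsolution $\varphi_{a,c}$ above, together with a $Q$-\emph{supersolution} $\psi_{a,c}=c-c\tanh(\delta\rho_a)$ built from the distance $\rho_a$ to a suitable convex set $\bar V_{a-b}$ (using $\Delta\rho\ge 2\tanh\rho$ from $\sect\le -1$), satisfying $0\le\varphi_{a,c}\le\psi_{a,c}\le c$. One then solves $Q[u_i]=0$ on the $Q$-regular $\Omega_i$ with boundary data $\varphi_{a,c}$, gets $\varphi_{a,c}\le u_i\le\psi_{a,c}$ by comparison, and passes to a limit via (B). The resulting $u_{a,c}$ satisfies $\lim_{y\to x}u_{a,c}(y)=0$ for $x\ne x_0$ but $\lim_{a\to-\infty}u_{a,c}=c$, so it is non-constant.

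In short, the heart of the argument is the one-parameter family $\{\varphi_{a,c}\}_{a\in\R}$ of global subsolutions with the pointwise limit property as $a\to-\infty$; your shell-by-shell oscillation picture does not capture this and, absent a Harnack-type input, does not close.
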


\begin{corollary}\label{cor:main_minimal}
There exists a $3$-dimensional Cartan-Hadamard manifold $M$ with sectional curvatures $\le -1$ such that the asymptotic 
Dirichlet problem for the minimal graph equation \eqref{min_eqn} is not solvable with any continuous non-constant boundary data, 
but there are non-constant bounded continuous solution of \eqref{min_eqn} on $M$.
\end{corollary}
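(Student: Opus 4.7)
The plan is to obtain Corollary~\ref{cor:main_minimal} as a direct specialization of Theorem~\ref{thm:main0} to the minimal graph operator, taking for $M$ the same $3$-dimensional Cartan-Hadamard manifold produced by that theorem. Thus no new manifold needs to be constructed; the only task is to verify that the minimal graph equation \eqref{min_eqn} satisfies all hypotheses of Theorem~\ref{thm:main0}, including the auxiliary properties (A) and (B) required for part (b).

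For the structural hypotheses on $\cA$, I would read off the coefficient functions
\[
\cA(t)=\frac{1}{\sqrt{1+t}},\qquad \cB(t)=\frac{\cA'(t)}{\cA(t)}=-\frac{1}{2(1+t)},
\]
and check \eqref{Agrowth}, \eqref{Bgrowth}, \eqref{A(0)} directly: one has $\cA(t)\le t^{-1/2}$ (so \eqref{Agrowth} holds with $A_0=1$ and $p=1$); $-1/(2t)<\cB(t)\le 0$ for all $t>0$ (so \eqref{Bgrowth} holds with $B_0=0$); and $t\cA(t^2)=t/\sqrt{1+t^2}\to 0$ as $t\to 0+$. This verification is in fact already recorded explicitly in the introduction.

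For the auxiliary conditions, I would invoke the cited external results. Condition (A), the existence of an exhaustion by $Q$-regular domains, is provided by \cite[Theorem~2]{DLR}, which gives solvability of the Dirichlet problem for the minimal graph equation on geodesic balls $B(o,k)\Subset M$; so one may take $\Omega_k=B(o,k)$. Condition (B), local compactness of locally uniformly bounded sequences of continuous solutions, follows from the interior gradient estimates of \cite[Theorem~1.1]{Spruck} (or \cite[Theorem~1]{DLR}) combined with the Arzelà–Ascoli theorem applied to the resulting locally equicontinuous family.

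With all hypotheses verified, Theorem~\ref{thm:main0}(a) yields the non-solvability of the asymptotic Dirichlet problem for \eqref{min_eqn} with any continuous non-constant boundary data on $M(\infty)$, and Theorem~\ref{thm:main0}(b) yields the existence of non-constant bounded continuous solutions of \eqref{min_eqn} on $M$. There is no genuine obstacle here: the full burden of the argument — the construction of the counterexample manifold and the nonsolvability/existence mechanism — lies in Theorem~\ref{thm:main0}, and the corollary is only a routine specialization of that theorem plus a citation of the relevant PDE existence and regularity theory for the minimal graph equation.
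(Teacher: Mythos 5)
Your proposal is correct and coincides with what the paper does: the corollary is stated as the specialization of Theorem~\ref{thm:main0} to the minimal graph operator, with the verification of \eqref{Agrowth}, \eqref{Bgrowth}, \eqref{A(0)} for $\cA(t)=1/\sqrt{1+t}$ and the appeal to \cite[Theorem~2]{DLR} for condition (A) and \cite[Theorem~1.1]{Spruck} for condition (B) all recorded in the introduction. No separate argument is given or needed.
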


The asymptotic Dirichlet problem for the Laplace-Beltrami operator has been extensively studied during the last three decades.
It was solved affirmatively by Choi \cite{choi} under assumptions that sectional curvatures satisfy $\sect\le -a^{2}<0$ and the 
so-called \define{convex conic neighborhood condition} holds. The latter means that for any pair of points 
$x,y\in M(\infty),\ x\ne y$, there exist disjoint neighborhoods $V_x, V_y\subset\bar{M}$ in the cone topology such that 
$V_x\cap M$ is convex with a $C^2$ boundary. Such appropriate convex sets were
constructed by Anderson \cite{andJDG} for manifolds of pinched sectional
curvature $-b^{2}\le \sect\le -a^{2}<0.$ Independently, Sullivan \cite{sullivan} solved the Dirichlet
problem at infinity under the same pinched curvature assumption by using probabilistic arguments. In \cite{andschoen}, 
Anderson and Schoen presented a simple and direct solution to the Dirichlet problem again in the case of pinched negative 
curvature. Important contributions to the Dirichlet problem were given by Ancona in a series of papers \cite{ancannals},  
\cite{anchyp}, \cite{ancpot}, and \cite{ancrevista}. In particular, he was able to replace the curvature lower bound  by a 
bounded geometry assumption that each ball up to a fixed radius is $L$-bi-Lipschitz equivalent to an open set in $\R^{n}$
for some fixed $L\ge 1$; see \cite{ancannals}. On the other hand, in \cite{ancrevista} Ancona constructed a 3-dimensional 
Cartan-Hadamard manifold with sectional curvatures bounded from above by $-1$ where the asymptotic Dirichlet problem is not 
solvable. Another example of a (3-dimensional) Cartan-Hadamard manifold, with sectional curvatures
$\le -1$, on which the asymptotic Dirichlet problem is not solvable was constructed by  Borb\'ely \cite{Bor}.

The Dirichlet problem at infinity has been studied also in a more general context of $p$-harmonic and $\cA$-harmonic functions
as well as for operators $Q$. In the case of the $p$-Laplace equation the Dirichlet problem at infinity was solved
in \cite{Ho} on Cartan-Hadamard manifolds of pinched negative sectional curvature by modifying the direct approach of Anderson 
and Schoen \cite{andschoen}.
In \cite{HoVa} Holopainen and V\"ah\"akangas studied the asymptotic Dirichlet problem for the $p$-Laplace equation  on a 
Cartan-Hadamard manifold $M$ under a curvature assumption
\begin{equation}\label{curv_assump}
-b\bigl(\rho(x)\bigr)^2\le \sect_x\le -a\bigl(\rho(x)\bigr)^2
\end{equation}
outside a compact set. Here $\rho(x)$ stands for the distance between $x\in M$ and a fixed point $o\in M$ and, furthermore,
$a,b\colon [0,\infty)\to [0,\infty),\ b\ge a,$ are smooth functions subject to certain growth conditions; see Theorem~\ref{CHR_thm1} and Theorem~\ref{HVkor2_RT} below for the two important special cases of functions $a$ and $b$.

Concerning the minimal graph equation \eqref{min_eqn} there has been a growing interest in entire minimal hypersurfaces in 
product spaces $M\times\R$. Indeed, in \cite{CR} Collin and Rosenberg constructed harmonic diffeomorphisms from the 
complex plane $\C$ onto the hyperbolic plane $\hyp^2$ disproving a conjecture of Schoen and Yau \cite{ScYau}. 
This result was extended by G{\'a}lvez and Rosenberg \cite{GR} to any Cartan-Hadamard surface $M$ with curvature bounded from above by a negative constant. 
The method in both papers is to construct an entire minimal surface $\Sigma=(x,u(x))\subset \hyp^2\times \R$ 
($\Sigma\subset M\times\R$, resp.) of conformal type $\C$, and thus to construct an entire unbounded solution 
$u$ to the minimal graph equation. Harmonic diffeomorphisms $\C\to \hyp^2$ ($\C\to M$, resp.) are then obtained by composing 
conformal diffeomorphisms $\C\to\Sigma$ with harmonic vertical projections $\Sigma\to\hyp^2$ ($\Sigma\to M$, resp.). 
In both papers the construction of an entire unbounded solution $u$ to the minimal graph equation is based on a 
Jenkins-Serrin type theorem \cite{JeSe} on the Dirichlet problem on unbounded ideal polygons.
Motivated by these unexpected results, by the desire to understand minimal hypersurfaces in product spaces $M\times\R$,
and by the recent research in this field (see for example, \cite{DHL}, \cite{ER}, \cite{MR}, \cite{NR}, 
\cite{RT}, \cite{RSS}, \cite{ET}, \cite{Spruck}), the authors of the current paper together with Casteras extended the results 
obtained in \cite{HoVa} for the $p$-Laplacian  to the minimal graph equation under curvature assumptions \eqref{curv_assump}. 
In fact, their results cover the equation \eqref{Q_equ}, with $\cA$ satisfying \eqref{Agrowth}, \eqref{Bgrowth}, \eqref{A(0)}, and
conditions (A) and (B). As special cases of their main theorem \cite[Theorem 1.6]{CHR} we state here the following two 
solvability results.
\begin{Thm}\cite[Theorem 1.5]{CHR}
\label{CHR_thm1} Let $M$ be a Cartan-Hadamard manifold of dimension $n\ge2$. Fix
$o\in M$ and set $\rho(\cdot)=d(o,\cdot)$, where $d$ is the Riemannian
distance in $M$. Assume that
\begin{equation*}
-\rho(x)^{2\left(  \phi-2\right)  -\varepsilon}%
\leq\sect_{x}(P)\leq-\dfrac{\phi(\phi-1)}{\rho(x)^{2}},
\end{equation*}
for some constants $\phi>1$ and $\varepsilon>0,$ where $\sect_{x}(P)$ is the
sectional curvature of a plane $P\subset T_{x}M$ and $x$ is any point in the
complement of a ball $B(o,R_{0})$. Then the asymptotic Dirichlet problem for
the minimal graph equation \eqref{min_eqn} is uniquely solvable for any
boundary data $f\in C\bigl(M(\infty)\bigr)$.
\end{Thm}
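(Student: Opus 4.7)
The plan is to follow the classical exhaustion-plus-barriers strategy that underlies every positive result on the asymptotic Dirichlet problem in this setting (Anderson--Schoen \cite{andschoen} for the Laplacian, \cite{Ho} for the $p$-Laplacian, Holopainen--V\"ah\"akangas \cite{HoVa} under the weaker curvature assumption \eqref{curv_assump}). First, using hypothesis (A), which for the minimal graph equation follows from \cite[Theorem 2]{DLR}, one obtains a sequence of $Q$-regular geodesic balls $B(o,k)$ exhausting $M$. Fix any continuous extension $F\in C(\bar M)$ of $f$ and let $u_k\in C(\bar B(o,k))$ be the $Q$-solution on $B(o,k)$ with boundary values $F$ on $\partial B(o,k)$. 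Since constants are $Q$-solutions, the comparison principle gives $\|u_k\|_{\infty}\le\|F\|_{\infty}$, and hypothesis (B) then yields a subsequence converging locally uniformly on $M$ to a continuous $Q$-solution $u$. Uniqueness reduces, as usual, to the comparison principle once boundary values at infinity have been identified.

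The nontrivial step is to prove that $u(x)\to f(x_0)$ in the cone topology, for each $x_0\in M(\infty)$. I would establish this by constructing, given $x_0$ and $\eta>0$, a global $Q$-supersolution $w\in C(\bar M)$ with $w\ge f$ on $M(\infty)$ and $w(x_0)\le f(x_0)+\eta$. The standard comparison argument applied to $w$ and $u_k$ on $B(o,k)$, followed by $k\to\infty$, yields $\limsup_{x\to x_0}u(x)\le f(x_0)+\eta$; the symmetric construction of a subbarrier gives the opposite inequality, and letting $\eta\downarrow 0$ finishes the argument.

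The barrier $w$ is built from two ingredients. The \emph{angular} part $\alpha\in C(\bar M)\cap C^\infty(M)$ is a smoothed angle function at $o$ with $\alpha(x_0)=0$ and $\alpha$ bounded below by a positive constant outside a cone neighborhood of $x_0$. The \emph{radial} correction is of the form $\psi\circ\rho$, where $\psi$ is chosen via an ODE dictated by the comparison model associated to the upper curvature bound. The bound $\sect\le-\phi(\phi-1)/\rho^2$ yields, via Hessian comparison against the warped model with warping function $r^\phi$, a lower bound on $\Hess\rho$ comparable to $(\phi/\rho)\bigl(g-d\rho\otimes d\rho\bigr)$ outside $B(o,R_0)$, which forces the angular coordinate to have rapidly decaying tangential derivatives at infinity; the lower bound $\sect\ge-\rho^{2(\phi-2)-\varepsilon}$ provides the matching upper bound on $\Hess\alpha$.

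The main obstacle is to verify the supersolution inequality $Q[w]\le 0$ outside a large ball by expanding
\[
Q[w]=\cA(|\nabla w|^2)\Delta w+2\cB(|\nabla w|^2)\Hess w(\nabla w,\nabla w),
\]
plugging in the Hessian estimates above, and absorbing everything into the dominant radial term using the growth conditions \eqref{Agrowth} and \eqref{Bgrowth}. Unlike in the $p$-Laplacian setting where homogeneity of $\cA$ in $|\nabla w|$ simplifies matters, here $\cA$ is non-homogeneous --- in particular, for the minimal graph equation $\cA$ is bounded --- so the small-gradient and large-gradient regimes of the ansatz must be treated in parallel, with the constants $\phi>1$ and $\varepsilon>0$ in the curvature bounds providing exactly the slack required to close both estimates, matching the scheme of \cite{HoVa} as extended in \cite{CHR}.
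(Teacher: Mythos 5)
This paper does not prove Theorem~\ref{CHR_thm1}; it is quoted verbatim from \cite[Theorem 1.5]{CHR} and used only as background to contrast solvability results with the nonsolvability construction that is the actual content here. There is therefore no in-paper proof to compare your sketch against. That said, your outline --- exhaustion by $Q$-regular balls via property (A), uniform bounds from comparison with constants, compactness via property (B), and boundary-value identification through global super/subsolution barriers built from an angular function plus a radial correction tuned by Hessian comparison under the two-sided curvature bounds --- is precisely the strategy the paper attributes to \cite{HoVa} and \cite{CHR} (``constructing barrier functions by direct computations''), and your remark about the non-homogeneity of $\cA$ forcing a split into small- and large-gradient regimes correctly identifies the new technical difficulty that separates the minimal graph case from the $p$-Laplacian case treated in \cite{HoVa}. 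The one place your sketch glosses over a genuine subtlety is the passage from ``$w\ge f$ on $M(\infty)$'' to the comparison $u_k\le w$ on $B(o,k)$: the barrier must dominate $F$ on $\partial B(o,k)$ for all large $k$, which in \cite{CHR} requires adding a suitable constant to $w$ and exploiting that $w\to\sup f$ outside a cone neighborhood of $x_0$; as written your argument asserts the conclusion without this adjustment.
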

\begin{Thm}\cite[Corollary 1.7]{CHR}
\label{HVkor2_RT} Let $M$ be a Cartan-Hadamard manifold of dimension $n\ge2$.
Fix $o\in M$ and set $\rho(\cdot)=d(o,\cdot)$, where $d$ is the Riemannian
distance in $M$. Assume that
\begin{equation}\label{curv_assump_k}
-\rho(x)^{-2-\varepsilon}e^{2k\rho(x)}\le\sect_{x}%
(P)\le-k^{2}%
\end{equation}
for some constants $k>0$ and $\varepsilon>0$ and for all $x\in M\setminus
B(o,R_{0})$. Then the asymptotic Dirichlet problem for the equation
\eqref{Q_equ} is uniquely solvable for any boundary data $f\in C\bigl(M(\infty)\bigr)$.
\end{Thm}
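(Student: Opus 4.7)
The plan is to run a Perron-type exhaustion on the $Q$-regular domains supplied by $(A)$ and to establish the boundary behaviour at infinity by explicit barriers. Extend $f\in C(M(\infty))$ continuously to $\tilde f\in C(\bar M)$ and let $u_k\in C(\bar\Omega_k)$ solve $Q[u_k]=0$ in $\Omega_k$ with $u_k|_{\partial\Omega_k}=\tilde f|_{\partial\Omega_k}$; the comparison principle (which is available under \eqref{Bgrowth} since the vector field $\xi\mapsto\cA(|\xi|^2)\xi$ is strictly monotone) gives $\|u_k\|_\infty\le\|f\|_\infty$. Assumption $(B)$ produces a locally uniformly convergent subsequence $u_{k_j}\to u$ with $Q[u]=0$ on $M$. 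The whole game is to verify that $u(x)\to f(\xi_0)$ as $x\to\xi_0$ for every $\xi_0\in M(\infty)$; uniqueness then follows again from comparison.

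\textbf{Barriers at infinity.} Fix $\xi_0\in M(\infty)$ and parametrize nearby infinity points by the cone angle $\theta(x)=\angle_o(\xi_0,x)$. For each small $\delta>0$ I would seek a nonnegative $Q$-supersolution $w_\delta$ on the truncated angular sector $C_\delta=\{x\in M\setminus\bar B(o,R_0):\theta(x)<2\delta\}$ with $w_\delta(x)\to 0$ as $x\to\xi_0$ within $C_\delta$ and $w_\delta\ge 2\operatorname{osc} f$ on $\partial C_\delta\cap M$. A separable ansatz $w_\delta(x)=\phi(\rho(x))+\eta(\theta(x))$, with $\phi$ decreasing to $0$, $\eta(0)=0$, and $\eta'>0$, reduces the supersolution inequality to a one-dimensional comparison: one expands
\[
Q[w]=\cA(|\nabla w|^2)\Delta w+2\cA'(|\nabla w|^2)\Hess w(\nabla w,\nabla w)
\]
and expresses $\Delta w$ and $\Hess w(\nabla w,\nabla w)$ in terms of $\phi',\phi'',\eta',\eta''$, the Hessians of $\rho$ and of $\theta$, and the orthogonality $\langle\nabla\rho,\nabla\theta\rangle=0$. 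Comparing $u_k$ against $f(\xi_0)\pm w_\delta$ on $C_\delta\cap\Omega_k$, and then letting $k\to\infty$ and $\delta\to 0$, yields continuity of $u$ at $\xi_0$.

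\textbf{Principal obstacle and balance.} The delicate point is matching the two sides of \eqref{curv_assump_k}. The upper bound $\sect\le -k^2$ yields, by Hessian and Jacobi-field comparison with $\hyp^2_{-k^2}$, exponential angular contraction $|\nabla\theta|\lesssim e^{-k\rho}$ together with $\Hess\rho\ge k\coth(k\rho)(g-d\rho\otimes d\rho)$. The lower bound $\sect\ge-\rho^{-2-\varepsilon}e^{2k\rho}$, however, permits the comparison Jacobi field to grow essentially like $\exp(e^{k\rho}/k)$, so $\Hess\rho$ on the angular distribution can be as large as $\rho^{-1-\varepsilon/2}e^{k\rho}$, which is the size driving the troublesome term in $\Delta w$. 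The rescue is that $|\nabla w|\lesssim e^{-k\rho}$ forces $\cA(|\nabla w|^2)$ and $\cA'(|\nabla w|^2)$ to carry via \eqref{Agrowth} and \eqref{Bgrowth} the compensating exponential $e^{-k\rho}$-scaling, so that the two exponentials cancel and only a polynomial residue $\rho^{-1-\varepsilon/2}$ remains. The polynomial slack $\rho^{-\varepsilon/2}$ built into the gap $\varepsilon>0$ between the two curvature bounds is exactly what must be exploited. I would pin $\eta(\theta)=\theta^\gamma$ with small $\gamma>0$ and $\phi(\rho)=\int_\rho^\infty s^{-1-\varepsilon/4}e^{-ks}\,ds$ (or a similar profile), tuning $\gamma$ and the exponents so that the radial term dominates the residual $\rho^{-1-\varepsilon/2}$ contribution coming from $\Hess\rho$ and gives $Q[w_\delta]\le 0$ for $\rho\ge R_0$. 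Producing this $w_\delta$ is the main technical step; once available, the exhaustion $\Omega_k\nearrow M$ together with comparison closes the argument.
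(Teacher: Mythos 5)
The statement you are trying to prove is not proved in this paper at all: it is recorded verbatim as a cited result, Corollary~1.7 of the reference \cite{CHR}, and the present paper only uses it for context (the paper constructs counterexamples, not solvability barriers). So there is no internal proof to compare against. That said, your overall plan --- exhausting $M$ by the $Q$-regular domains of assumption (A), bounding via the comparison principle, extracting a limit via (B), and pinning down boundary values at infinity by explicit super/subsolutions --- is the route taken in \cite{HoVa} and \cite{CHR} (the paper itself says so: ``In \cite{HoVa} and \cite{CHR} the asymptotic Dirichlet problem were solved by constructing barrier functions by direct computations''). The exhaustion, compactness, and uniqueness steps you sketch are sound.

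The barrier step, however, misidentifies the mechanism behind the key cancellation, and this is not merely cosmetic. You argue that $\cA(|\nabla w|^2)$ and $\cA'(|\nabla w|^2)$ ``carry the compensating exponential $e^{-k\rho}$-scaling'' because $|\nabla w|\lesssim e^{-k\rho}$. For the minimal graph equation, the motivating case here, $\cA(t)=1/\sqrt{1+t}\to 1$ as $t\to 0$; no decay whatsoever is produced by $\cA$, and the growth hypotheses \eqref{Agrowth}--\eqref{Bgrowth} are one-sided and force no such decay for $p\le 2$. Moreover $\cA'(t)=\cB(t)\cA(t)$ with $\cB(t)\le B_0/t$ gives $\cA'(|\nabla w|^2)|\nabla w|^2\lesssim\cA(|\nabla w|^2)$, so the Hessian term is of the same order as the $\Delta w$ term and also does not supply extra decay. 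The exponential cancellation actually occurs inside $\Delta w$ itself: with $\Delta\rho\lesssim\rho^{-1-\varepsilon/2}e^{k\rho}$ from the curvature lower bound and $\phi'(\rho)\approx-\rho^{-1-\varepsilon/4}e^{-k\rho}$ from your ansatz, the product $\phi'(\rho)\,\Delta\rho=O(\rho^{-2-3\varepsilon/4})$ is already polynomial and negative in sign before $\cA$ enters; the $\cA$-factor is, to leading order, a harmless bounded multiplier. In addition, the specific angular profile $\eta(\theta)=\theta^\gamma$ with small $\gamma>0$ makes $\eta'(\theta)|\nabla\theta|\approx\gamma\,\theta^{\gamma-1}e^{-k\rho}$ blow up as $\theta\to 0$, contradicting the bound $|\nabla w|\lesssim e^{-k\rho}$ that you are relying on; and the normalization $w_\delta\ge 2\,\mathrm{osc}\,f$ on $\partial C_\delta$ is incompatible with $\eta(2\delta)=(2\delta)^\gamma\to 0$ without an additional large multiplicative constant in front of $\eta$, which then has to be rechecked against the supersolution inequality. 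These are exactly ``the main technical step'' you identify and leave unexecuted, but as written the rationale you give for why the main term closes is incorrect.
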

Earlier solvability results of the asymptotic Dirichlet 
problem for the minimal graph equation were established only under hypothesis which included the condition 
$\sect_{x}(P)\leq c<0$ (see \cite{GR}, \cite{RT}). In \cite{RT} Ripoll and Telichevesky 
introduced the following \emph{strict convexity condition\/} (SC condition) that applies to equations 
\eqref{Q_equ}. A Cartan-Hadamard manifold $M$ satisfies the strict convexity condition if, for every $x\in M(\infty)$
and relatively open subset $W\subset M(\infty)$ containing $x$, there exists a $C^2$ open subset 
$\Omega\subset M$ such that $x\in\Int(M(\infty))\subset W$ and $M\setminus\Omega$ is convex.  
They proved that the asymptotic Dirichlet problem for \eqref{Q_equ} on $M$ is solvable if $\sect\le -k^2<0$
and $M$ satisfies the SC condition; see \cite[Theorem 7]{RT}. Furthermore, they showed by modifying 
Anderson's and Borb\'{e}ly's arguments that the SC condition holds on $M$ under the curvature assumption
\eqref{curv_assump_k}. Thus there exists two different kind of proofs for the result in Corollary~\ref{HVkor2_RT}.
We remark that $2$-dimensional Cartan-Hadamard manifolds $M$ with $\sect\le -k^2<0$ satisfy the SC condition
since any two points of $M(\infty)$ can be joined by a geodesic. Thus a sectional curvature upper bound $\sect\le -k^2<0$
alone is sufficient for the solvability of the asymptotic Dirichlet problem for \eqref{Q_equ} 
for $2$-dimensional Cartan-Hadamard manifolds.

All in all, it is rather surprising that asymptotic Dirichlet problems for various equations are solvable under 
essentially similar curvature assumptions.
Moreover, these solvability results have been obtained by using different kind of proofs. Indeed, Hsu \cite{Hs} solved 
the Dirichlet problem at infinity for the usual Laplace equation under quite similar curvature 
conditions than those in \ref{CHR_thm1} and \ref{HVkor2_RT} by using probabilistic arguments. In \cite{HoVa} and \cite{CHR} the asymptotic Dirichlet problem were solved by constructing barrier functions by direct computations. In \cite{Va2} V\"ah\"akangas considered so-called $\cA$-harmonic equations (of type $p\in (1,\infty)$)
\begin{equation}\label{A_harm_eqn}
-\diver\cA(\nabla u)=0
\end{equation}
and solved the asymptotic Dirichlet problem again under similar curvature assumptions. He used PDE-methods to obtain barrier functions. Above in \eqref{A_harm_eqn}, $\cA$ is subject to certain conditions; for instance $\langle\cA(V),V\rangle\approx\abs{V}^p,\ 1<p<\infty,$ 
and $\cA(\lambda V)=\lambda\abs{\lambda}^{p-2}\cA(V)$ for all $\lambda\in\R\setminus\{0\}$. Note that this class of 
equations is different from ours in the current paper, although both include the $p$-Laplace equation.
We refer to the recent paper \cite{CHR} for a more detailed discussion on the asymptotic Dirichlet problem for 
equations of type \eqref{Q_equ} and \eqref{A_harm_eqn}.

Our paper owes much to the paper \cite{Bor} by Borb\'ely. Indeed, the construction of the manifold $M$ and the idea for the proof 
of the existence of non-trivial bounded continuous solutions to $Q[u]=0$ on $M$ that can not be extended continuously to 
$M(\infty)$ are essentially due to him. On the other hand, computations and estimates for solutions to $Q[u]=0$ in 
Sections~\ref{sec:Q-varphi} and \ref{sec:construct-q} are more involved than those for the Laplacian in \cite{Bor}. For the 
details in the construction of the manifold $M$ we mainly refer to \cite{H_ns} and to the original construction \cite{Bor} by 
Borb\'ely. However, for the convenience of the reader we feel obliged to repeat quite an amount of details in the 
construction of $M$. 

\section{Main results}\label{sec:mainresults}
Our main result, Theorem~\ref{thm:main0}, follows from the  condition (a) below since it clearly implies that no
non-constant bounded continuous $Q$-solution on $M$ can have a continuous extension to $x_{0}\in M(\infty)$.
\begin{Thm}\label{thm:main1}
There exists a $3$-dimensional Cartan-Hadamard manifold $M$ with sectional curvatures $\le -1$ and a point 
$x_{0}\in M(\infty)$ such that 
\begin{enumerate}
\item[(a)] for any operator $Q$, with $\cA$ satisfying \eqref{Agrowth}, \eqref{Bgrowth}, and \eqref{A(0)},
 for all bounded continuous $Q$-solutions $u$ on $M$, and for all (cone) neighborhoods $U$ of $x_{0}$,
\[
\inf_{M}u=\inf_{U\cap M}u,\quad\sup_{M}u=\sup_{U\cap M}u,\ \text{ and}
\]
\item[(b)] there are non-constant bounded continuous $Q$-solutions on $M$ if, in addition, $Q$ satisfies the 
properties (A) and 
(B).
\end{enumerate}
\end{Thm}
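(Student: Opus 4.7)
\medskip

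The plan is to follow Borb\'ely's strategy from \cite{Bor} (and its $p$-Laplace adaptation in \cite{H_ns}), with the extra work concentrated in the nonlinear estimates needed to replace the mean-value/harmonic-measure arguments used in the linear case.

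First, I would construct $M$ and $x_0$ explicitly. Following \cite{Bor} and \cite{H_ns}, I would start from a warped-product-type model built on $\hyp^2\times\R$ and modify the warping functions along a distinguished geodesic ray $\gamma\colon[0,\infty)\to M$, with $\gamma(0)=o$ and $\gamma(\infty)=x_0\in M(\infty)$, so that: (i) the resulting metric is a complete, simply connected $3$-manifold with $\sect\le -1$ everywhere (this forces careful curvature bookkeeping on the transition regions); (ii) there is a family of compact ``cones'' $C_r\subset M$ whose boundaries consist of two pieces, a ``good'' piece $S_r$ lying in every cone neighborhood of $x_0$ for $r$ large, and a ``bad'' piece $T_r$ on which the transverse diameter grows extremely slowly with $r$. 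The key geometric feature that drives everything is the convexity of the complement of $C_r$ together with a quantitative estimate on the area/capacity of $T_r$.

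For part (a), let $u$ be a bounded continuous $Q$-solution and let $U$ be any cone neighborhood of $x_0$. Set $M_U=\sup_{U\cap M}u$ and $M_\infty=\sup_M u$; I want to show $M_U=M_\infty$. The idea is to choose $r$ so large that $S_r\subset U\cap M$, and then to run a comparison argument on $C_r$: on $\partial C_r = S_r\cup T_r$ the solution $u$ is bounded by $M_U$ on $S_r$ and by $M_\infty$ on $T_r$, and one constructs a $Q$-supersolution (call it $\varphi_r$) on $C_r$ whose boundary trace satisfies $\varphi_r\ge M_U$ on $S_r$ and $\varphi_r\ge M_\infty$ on $T_r$ but with $\varphi_r(o)\to M_U$ as $r\to\infty$. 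Applying the comparison principle for $Q$ yields $u(o)\le\varphi_r(o)$, and letting $r\to\infty$ gives $u(o)\le M_U$; a translation argument (replacing $o$ by any point) then gives $M_\infty\le M_U$. The analogous argument for $-u$ handles the infimum. The supersolution $\varphi_r$ will be built from a radial profile $\psi\circ d(\gamma,\cdot)$ in the spirit of Sections~\ref{sec:Q-varphi} and \ref{sec:construct-q}; the hard part is to exploit \eqref{Agrowth}--\eqref{A(0)} to prove that this profile satisfies $Q[\varphi_r]\le 0$ under the weak nonlinearity assumptions, which is what forces the more involved computations compared with the Laplace case.

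For part (b), assume (A) and (B). Pick any continuous non-constant $h\colon M(\infty)\to[0,1]$ that equals $1/2$ at $x_0$, and let $u_k\in C(\bar\Omega_k)$ be the unique $Q$-solution with $u_k|\partial\Omega_k$ a continuous extension of $h$ (using a fixed continuous extension of $h$ to $\bar M$ and restricting to $\partial\Omega_k$). By the maximum principle the sequence $u_k$ is uniformly bounded in $[0,1]$, so by (B) a subsequence converges locally uniformly to a bounded continuous $Q$-solution $u$ on $M$. To see that $u$ is non-constant, apply part (a) to a cone neighborhood $U$ of $x_0$ on which $h<3/4$: pick a point $y\in M(\infty)$ with $h(y)=1$, a cone neighborhood $V$ of $y$ disjoint from $U$, and a point $p_k\in\Omega_k\cap V$ far from $\partial\Omega_k$; comparison with the solution of $Q[v]=0$ on a ball around $p_k$ with boundary values close to $1$ forces $u_k(p_k)$ to be close to $1$, while part (a) would be contradicted if the limit $u$ were constant. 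The main obstacle in this part is the non-triviality argument, but once the cone $C_r$ geometry is in place it reduces to a standard barrier estimate.

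The principal difficulty throughout is the supersolution construction in part (a): one needs a radial barrier $\varphi_r$ for the fully nonlinear operator $Q$ whose level sets track the convex hull of $C_r$ and whose boundary behavior on $T_r$ can be made independent of $r$ (so the contribution of $T_r$ to the capacity bound tends to zero). Because $\cA$ is merely controlled by \eqref{Agrowth}--\eqref{Bgrowth}, without homogeneity, the usual $p$-Laplace capacity estimates from \cite{H_ns} have to be re-done with $\cA(|\nabla\varphi|^2)\nabla\varphi$ replacing $|\nabla\varphi|^{p-2}\nabla\varphi$; this is where the detailed work of Sections~\ref{sec:Q-varphi} and \ref{sec:construct-q} enters.
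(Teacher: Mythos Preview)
Your approach to both parts differs substantially from the paper's, and each part has a genuine gap.

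For part (a), the paper does not work on compact domains $C_r$ with a capacity-type estimate on a ``bad'' boundary piece $T_r$. Instead, it first establishes (as Theorem~\ref{thm:main3}) a family of \emph{global} continuous $Q$-subsolutions $\varphi_{a,c}$ on $M$, indexed by $a\in\R$, with $0\le\varphi_{a,c}\le c$, $\lim_{y\to x}\varphi_{a,c}(y)=0$ for every $x\in M(\infty)\setminus\{x_0\}$, and $\lim_{a\to-\infty}\varphi_{a,c}(x)=c$ for every $x\in M$. Given a bounded $Q$-solution $h$ with $b=\inf_M h$ and $B=\inf_{U\cap M}h$, one sets $c=B-b$ and checks that $f_a:=b+\varphi_{a,c}$ satisfies $\liminf_{y\to x}\bigl(h(y)-f_a(y)\bigr)\ge 0$ at \emph{every} $x\in M(\infty)$ (the two asymptotic properties of $\varphi_{a,c}$ handle $x\ne x_0$ and $x=x_0$ separately). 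The comparison principle, applied on components of $\{h<f_a-\varepsilon\}$, then gives $h\ge f_a$ on all of $M$; sending $a\to-\infty$ yields $h\ge B$ everywhere, hence $b=B$. The comparison is thus carried out \emph{at infinity}, and the one-parameter family sweeps out all of $M$ as $a\to-\infty$, so no ``translation argument'' is needed. Your proposed barrier $\psi\circ d(\gamma,\cdot)$ depends only on the Fermi $r$-coordinate; such a function cannot separate $S_r$ from $T_r$ (neither is a level set of $r$), and the paper's actual subsolutions $\varphi_{a,c}$ are constant on surfaces $S^a_s$ swept out by integral curves of $R+q_a(r)S$, not functions of $r$ alone.

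For part (b), the paper again uses Theorem~\ref{thm:main3}: one solves $Q[u_i]=0$ on $\Omega_i$ with boundary data $\varphi_{a,c}$, obtains $\varphi_{a,c}\le u_i\le\psi_{a,c}$ by comparison with the explicit supersolution $\psi_{a,c}$, and passes to a limit $u_{a,c}$. Nonconstancy is then immediate from the sandwich: $u_{a,c}\ge\varphi_{a,c}>0$ on the nonempty open set $M_a$, while $u_{a,c}\le\psi_{a,c}\to 0$ along sequences tending to any point of $M(\infty)\setminus\{x_0\}$. Your nonconstancy argument does not work: with arbitrary continuous boundary data on $\partial\Omega_k$, nothing forces $u_k(p_k)$ to be close to $1$, since comparison on a ball around $p_k$ only controls $u_k$ in terms of its own (unknown) boundary values on that ball, and part (a) says nothing against constant limits. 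This is exactly the obstruction---the asymptotic Dirichlet problem is \emph{not} solvable on $M$---so generic boundary data give no information about the limit; the explicit sub/supersolution sandwich is what makes the argument go through.
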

The claim (b) above follows from the next result. 
\begin{Thm}\label{thm:main2}
Let $M$ and $x_{0}\in M(\infty)$ be as in \ref{thm:main1} and suppose that, in addition to \eqref{Agrowth}, \eqref{Bgrowth}, and \eqref{A(0)},
$Q$ satisfies also the properties (A) and (B). Then there exists a family of functions $u_{a,c},$ with $a\in\R$ and $c>0$, in $\bar M$ that are continuous 
$Q$-solutions on $M$, $0\le u_{a,c}\le c$, and satisfy 
\begin{enumerate}
\item[(a)] $u_{a,c}\vert M(\infty)=c \chi_{\{x_0\}}$,
\item[(b)] 
\[
\lim_{y\to x}u_{a,c}(y)=0
\]
for all $a\in\R$ and $x\in M(\infty)\setminus\{x_{0}\}$, and
\item[(c)] 
\[
\lim_{a\to-\infty}u_{a,c}(x)=c
\]
for all $x\in M$.
\end{enumerate}
\end{Thm}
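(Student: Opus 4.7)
The plan is to build $u_{a,c}$ by a Perron-type procedure on the $Q$-regular exhaustion $\{\Omega_k\}$ guaranteed by hypothesis (A), bracketing the approximating $Q$-solutions between a continuous $Q$-supersolution $\varphi_{a,c}^+$ and a continuous $Q$-subsolution $\varphi_{a,c}^-$ on $\bar M$ whose construction occupies the subsequent Sections~\ref{sec:Q-varphi}--\ref{sec:construct-q}. The key properties I will need of these barriers are: (i) $0\le \varphi_{a,c}^-\le \varphi_{a,c}^+\le c$ on $\bar M$, with $Q[\varphi_{a,c}^+]\le 0$ and $Q[\varphi_{a,c}^-]\ge 0$ weakly on $M$; (ii) both $\varphi_{a,c}^\pm$ extend continuously to $\bar M\setminus\{x_0\}$ with boundary value $0$ on $M(\infty)\setminus\{x_0\}$; and (iii) for each $x\in M$, $\varphi_{a,c}^-(x)\to c$ as $a\to-\infty$. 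The parameter $a$ is geometric in nature: pushing $a\to-\infty$ enlarges the region on which $\varphi_{a,c}^-$ is close to $c$.

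First I would fix $a\in\R$ and $c>0$, and for each $k$ invoke (A) to obtain a unique $u_k\in C(\bar\Omega_k)$ solving $Q[u_k]=0$ in $\Omega_k$ with $u_k|\partial\Omega_k=\varphi_{a,c}^+|\partial\Omega_k$. The standard comparison principle for $Q$ (valid under \eqref{Agrowth}--\eqref{Bgrowth} by testing the positive part of the difference against a cut-off) gives $\varphi_{a,c}^-\le u_k\le \varphi_{a,c}^+$ on $\bar\Omega_k$. Assumption (B) then yields, along a subsequence, local uniform convergence $u_k\to u_{a,c}$ on $M$ to a continuous $Q$-solution satisfying $\varphi_{a,c}^-\le u_{a,c}\le \varphi_{a,c}^+$ on $M$.

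Extending $u_{a,c}$ to $\bar M$ by $u_{a,c}(x_0)=c$ and $u_{a,c}(x)=0$ for $x\in M(\infty)\setminus\{x_0\}$ produces (a) by fiat, while (b) follows from the upper bound $u_{a,c}\le\varphi_{a,c}^+$ together with (ii). For (c), the lower bound $u_{a,c}\ge\varphi_{a,c}^-$ and (iii) force $\liminf_{a\to-\infty}u_{a,c}(x)\ge c$; since $u_{a,c}\le c$ this delivers the desired pointwise limit $c$ on $M$.

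The principal obstacle is the explicit construction of $\varphi_{a,c}^\pm$ and the verification of the weak differential inequalities $Q[\varphi_{a,c}^+]\le 0$, $Q[\varphi_{a,c}^-]\ge 0$ on the Borb\'ely-type manifold $M$. This step is substantially more delicate than in the linear Laplace setting of \cite{Bor}, because the nonlinearity $\cA(|\nabla u|^2)\nabla u$ produces additional error terms that must be absorbed uniformly along the ``almost-cylindrical'' regions used in the construction of $M$. The growth conditions \eqref{Agrowth}--\eqref{Bgrowth} on $\cA$ and $\cB=\cA'/\cA$ are precisely what is needed to keep these nonlinear corrections under control; the computations that realize this are the ones promised for Sections~\ref{sec:Q-varphi}--\ref{sec:construct-q}.
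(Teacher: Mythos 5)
Your argument is essentially the paper's own proof of Theorem~\ref{thm:main2}: both approaches solve the $Q$-Dirichlet problem on the $Q$-regular exhaustion $\{\Omega_k\}$ provided by (A), bracket the approximants between the sub- and supersolution families of Theorem~\ref{thm:main3} via the comparison principle (Lemma~\ref{lem_comp}), and extract a locally uniform limit using (B) and a diagonal argument, with (a)--(c) then following directly from the bracket and the barrier properties. The only cosmetic differences are that you take the boundary data for $u_k$ from the supersolution while the paper takes it from the subsolution $\varphi_{a,c}$ (either choice yields the same two-sided bound), and your phrase ``Perron-type'' is a slight misnomer for what is really an exhaustion-and-compactness construction.
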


The proofs of Theorem~\ref{thm:main1} and Theorem~\ref{thm:main2} are based on the following theorem.
\begin{Thm}\label{thm:main3}
There exists a $3$-dimensional Cartan-Hadamard manifold $M$ of sectional curvatures $\le -1$ and a point 
$x_{0}\in M(\infty)$ with the following properties. 
For all operators $Q$, with $\cA$ satisfying \eqref{Agrowth}, \eqref{Bgrowth}, and \eqref{A(0)},
there exist families of functions $\varphi_{a,c}$ and $\psi_{a,c}$ in $\bar M$, with $a\in\R,\ c\ge 0$, 
such that $\varphi_{a,c}$ is a continuous $Q$-subsolution on $M$, $\psi_{a,c}$ is a continuous $Q$-supersolution on $M$, 
$0\le\varphi_{a,c}\le\psi_{a,c}\le c$, and that 
\begin{enumerate}
\item[(a)] $\varphi_{a,c}\vert M(\infty)=\psi_{a,c}\vert M(\infty)=c \chi_{\{x_0\}}$,
\item[(b)]
\[
\lim_{y\to x}\psi_{a,c}(y)=0
\]
for all $a\in\R$ and $x\in M(\infty)\setminus\{x_{0}\}$, and
\item[(c)]
\[
\lim_{a\to-\infty}\varphi_{a,c}(x)=c
\]
for all $x\in M$.
\end{enumerate}
\end{Thm}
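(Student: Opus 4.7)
My first task would be to produce the manifold $M$ and the point $x_0$. I would follow Borb\'ely's construction as modified by Holopainen \cite{H_ns}: take an underlying warped-product ambient manifold and deform the metric in a tubular neighborhood of a suitable geodesic ray so as to create a ``funnel'' converging to a single ideal boundary point $x_0$ while keeping $\sect \le -1$. I would set $x_0$ to be the endpoint of that ray on $M(\infty)$. The essential feature, inherited from Borb\'ely, is the existence of an auxiliary (Busemann-type) function $h\colon M \to \R$, smooth outside a compact set, whose level sets collapse toward $x_0$ and for which Hessian-comparison estimates can be extracted from the explicit form of the metric.

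Given $M$, $x_0$ and $h$, I would build $\psi_{a,c}$ and $\varphi_{a,c}$ as compositions $c\cdot\Psi(h+a)$ and $c\cdot\Phi(h+a)$ with smooth profiles $\Phi\le\Psi$ on $\R$, both non-decreasing and taking value $0$ near $-\infty$ and $1$ near $+\infty$. Expanding the operator in the standard form
\[
Q[u] = \cA\bigl(\abs{\nabla u}^2\bigr)\Bigl[\Delta u + 2\,\cB\bigl(\abs{\nabla u}^2\bigr)\,\Hess u(\nabla u,\nabla u)\Bigr],
\]
the subsolution/supersolution inequalities (at points where $\nabla u \ne 0$) reduce to scalar conditions of the form
\[
\Delta u + 2\,\cB\bigl(\abs{\nabla u}^2\bigr)\,\Hess u(\nabla u,\nabla u) \gtrless 0,
\]
which must be verified uniformly for every $\cB$ allowed by \eqref{Bgrowth}. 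Since $\nabla u$ is parallel to $\nabla h$, the calculation can be organised so that the Hessian correction has a definite sign and the worst cases reduce to the endpoint values $t\cB(t) = -1/2$ and $t\cB(t) = B_0$. Picking $\Phi$ convex enough and $\Psi$ concave enough then delivers the required inequalities, while \eqref{Agrowth} and \eqref{A(0)} guarantee the proper Sobolev regularity and admissibility of the weak formulations \eqref{Qsol}--\eqref{Qsub}.

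With the functions in hand, properties (a)--(c) follow from the geometry of $h$. The identity $\psi_{a,c}\vert M(\infty) = \varphi_{a,c}\vert M(\infty) = c\,\chi_{\{x_0\}}$ holds because $h\to+\infty$ along sequences in $M$ tending to $x_0$ in the cone topology, while $h\to-\infty$ along sequences tending to any other ideal boundary point, so the profiles saturate at $1$ or $0$ accordingly. Item (b) then follows from continuity of $\Psi$ and its vanishing at $-\infty$. Item (c) uses the fact that as $a\to-\infty$ the superlevel set $\{h+a\ge N\}$ exhausts $M$ for every fixed $N$, whence $\Phi(h+a)\to 1$ pointwise on $M$.

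The main obstacle, compared with the pure Laplacian case treated by Borb\'ely, is the extra Hessian term weighted by $\cB$. One must design the profiles $\Phi$, $\Psi$ and exploit the geometry of $h$ so that the trace $\Delta u$ and the directional derivative $\Hess u(\nabla u,\nabla u)$ are simultaneously controlled, with the right sign and compatible sizes, \emph{uniformly} over the admissible range of $\cB$. This delicate balance is what forces the more refined calculations reserved for Sections~\ref{sec:Q-varphi} and~\ref{sec:construct-q}, in contrast to the simpler estimates in \cite{Bor}.
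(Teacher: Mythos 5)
Your overall framing---construct the manifold $M$ following Borb\'ely, then build barriers whose level sets collapse to $x_0$, then verify the differential inequality
$\Delta u + 2\cB(\abs{\nabla u}^2)\Hess u(\nabla u,\nabla u) \gtrless 0$
uniformly over the admissible range of $\cB$---is aligned with the paper's strategy, and your identification of the extra Hessian term as the new difficulty is correct. However, there is a structural gap in the heart of your proposal.

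You propose to build \emph{both} $\varphi_{a,c}$ and $\psi_{a,c}$ as profile compositions $c\,\Phi(h+a)$ and $c\,\Psi(h+a)$ of a \emph{single} Busemann-type auxiliary function $h$. The paper instead uses two genuinely different geometric constructions, and this is not an accident. The subsolutions $\varphi_{a,c}$ are constant on surfaces $S^a_s$ swept out by rotating integral curves of the vector field $X^a = R + q_a(r)S$, where $q_a$ is a piecewise-designed function with thresholds $T_0,\dots,T_3$ that depend on $a$ (and on $B_0$); the sign of $Q[\varphi_{a,c}]$ is then controlled by a direct computation exploiting the explicit form of the metric coefficient $g$ and its derivative $\beta$. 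The supersolutions $\psi_{a,c}$ are built from a completely different foliation: they are $c - c\tanh(\delta\rho_a)$, where $\rho_a = \dist(\cdot,\bar V_{a-b})$ is the distance function to a convex set $\bar V_{a-b}$, and the supersolution property comes from the Laplacian comparison inequality $\Delta\rho \ge (n-1)\tanh\rho$. These two foliations do not coincide and are not shifts of one another. If a single $h$ existed with the property that a convex profile gives a subsolution and a concave profile gives a supersolution, with the required boundary behavior at $x_0$, one would be perilously close to constructing a bounded $Q$-solution with the prescribed boundary values---exactly what the manifold is engineered to exclude. In fact the reason two distinct constructions are needed is the same ``thinness'' near $x_0$ that makes the Dirichlet problem fail. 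The inequality $\varphi_{a,c}\le\psi_{a,c}$ is obtained not from ordering profiles but by showing that the support $M_a$ of $\varphi_{a,c}$ is contained in $\bar V_{a-b}$, where $\psi_{a,c}\equiv c$.

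Two further points are also glossed over. First, both $\varphi_{a,c}$ and $\psi_{a,c}$ are only nontrivial on open subsets ($M_a$, $M\setminus\bar V_{a-b}$) across whose boundaries $\nabla u$ is discontinuous, so a separate argument is needed to show they are global weak sub/supersolutions; the paper does this by a divergence-theorem computation in which the boundary term has the right sign because $\nabla\varphi_{a,c}$ points into $M\setminus M_a$. Second, in your item (c) you invoke ``$\{h+a\ge N\}$ exhausts $M$,'' but the paper actually needs the more quantitative properties \eqref{qehto1} and \eqref{qehto2} for the $a$-dependent functions $q_a$ (Remark~\ref{q-prop}), which do not reduce to a translation of a fixed $h$. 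Without the two-foliation architecture, the delicate $a$-uniformity and the boundary matching in (a)--(c) cannot be carried out as you describe.
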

Since $0\le\varphi_{a,c}\le\psi_{a,c}\le c$, we also have 
\begin{enumerate}
\item[(b')]
\[
\lim_{y\to x}\varphi_{a,c}(y)=0
\]
for all $a\in\R$ and $x\in M(\infty)\setminus\{x_{0}\}$, and
\item[(c')]
\[
\lim_{a\to-\infty}\psi_{a,c}(x)=c
\]
for all $x\in M$.
\end{enumerate}

In order to deduce Theorem~\ref{thm:main1} and Theorem~\ref{thm:main2} from Theorem~\ref{thm:main3} we state the 
following important \define{comparison principle}, cf. \cite[Lemma 3.18]{HKM} and \cite[Lemma 3]{RT}.
We refer to \cite[Lemma 2.1]{CHR} for its short proof which is based on the fact that $t\mapsto t\cA(t^2)$ is 
strictly increasing by \eqref{Bgrowth}. 
\begin{Lem}\label{lem_comp}
If $u\in W^{1,p}(\Omega)$ is a $Q$-supersolution and $v\in W^{1,p}(\Omega)$ is a $Q$-subsolution such that
$\varphi=\min(u-v,0)\in W^{1,p}_{0}(\Omega)$, then $u\ge v$ a.e. in $\Omega$.
\end{Lem}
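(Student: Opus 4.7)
The natural strategy is to use $\eta := -\varphi = \max(v-u,0) \in W^{1,p}_0(\Omega)$ as a non-negative test function in the defining inequalities for $u$ as a $Q$-supersolution and $v$ as a $Q$-subsolution, and then invoke the strict monotonicity of the vector field $F(X):=\cA(\abs{X}^2)X$ encoded in \eqref{Bgrowth} and \eqref{A(0)}.

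The first step is to note that $\nabla \eta = (\nabla v - \nabla u)\chi_{\{v>u\}}$. By the growth bound \eqref{Agrowth} together with $\abs{\nabla u}, \abs{\nabla v} \in L^p(\Omega)$, the fields $\cA(\abs{\nabla u}^2)\nabla u$ and $\cA(\abs{\nabla v}^2)\nabla v$ lie in $L^{p/(p-1)}(\Omega)$, so by the density remark following \eqref{Qsub} one may take $\eta$ as a test function. Subtracting the two resulting weak inequalities gives
\begin{equation*}
\int_{\{v>u\}} \bigl\langle F(\nabla v) - F(\nabla u),\ \nabla v - \nabla u\bigr\rangle\, dm \le 0.
\end{equation*}

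The heart of the argument is the pointwise inequality $\langle F(X) - F(Y), X - Y\rangle \ge 0$, with equality only when $X = Y$. Following the hint in the text, I would set $g(r) := r\cA(r^2)$, so that \eqref{Bgrowth} yields $g'(r) = \cA(r^2)\bigl(1 + 2r^2 \cB(r^2)\bigr) > 0$, whence $g$ is strictly increasing on $[0,\infty)$ (with $g(0)=0$ by \eqref{A(0)}). Writing $F(X) = g(\abs{X})X/\abs{X}$ for $X\ne 0$, Cauchy--Schwarz on the cross terms produces
\begin{equation*}
\langle F(X) - F(Y), X - Y\rangle \ge \bigl(g(\abs{X}) - g(\abs{Y})\bigr)\bigl(\abs{X} - \abs{Y}\bigr) \ge 0,
\end{equation*}
which is strictly positive whenever $\abs{X} \ne \abs{Y}$. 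In the remaining case $\abs{X}=\abs{Y}=:r > 0$, the left-hand side reduces to $g(r)\abs{X-Y}^2/r > 0$ for $X\ne Y$, and the case $X=Y=0$ is trivial.

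Combining the two pieces, the integrand in the integral inequality is pointwise non-negative with non-positive integral, hence vanishes a.e.\ on $\{v>u\}$; the strictness then forces $\nabla v = \nabla u$ a.e.\ on $\{v>u\}$, so $\nabla \eta = 0$ a.e.\ in $\Omega$. Since $\eta \in W^{1,p}_0(\Omega)$, a standard Poincar\'e argument forces $\eta \equiv 0$, which is the claim $u\ge v$ a.e. The one delicate point I would verify most carefully is the strict monotonicity of $F$ globally on pairs of tangent vectors, including pairs where one of $X,Y$ is the zero vector or where the segment joining them passes through the origin; the strict inequality in the lower bound of \eqref{Bgrowth} together with the boundary condition \eqref{A(0)} is tailored precisely to make $F$ continuous at $0$ and to keep the monotonicity strict across the degenerate set $\{X=0\}$.
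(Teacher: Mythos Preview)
Your proposal is correct and follows precisely the approach the paper indicates: the paper does not spell out the proof but refers to \cite[Lemma 2.1]{CHR}, noting that the key ingredient is the strict monotonicity of $t\mapsto t\cA(t^2)$ coming from \eqref{Bgrowth}, which is exactly your function $g$ and the pivot of your argument. Your treatment of the monotonicity of $F(X)=\cA(\abs{X}^2)X$ via Cauchy--Schwarz plus the case split $\abs{X}\ne\abs{Y}$ versus $\abs{X}=\abs{Y}$ is the standard way to turn the one-variable strict monotonicity into the vector inequality, and the concluding step $\nabla\eta=0$ in $W^{1,p}_0(\Omega)\Rightarrow\eta=0$ is routine.
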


As a consequence, we obtain the uniqueness of $Q$-solutions with fixed (Sobolev) boundary data.
\begin{corollary}
If $u\in W^{1,p}(\Omega)$ and $v\in W^{1,p}(\Omega)$ are $Q$-solutions with $u-v\in W^{1,p}_{0}(\Omega)$, then $u=v$ a.e. in $\Omega$.
\end{corollary}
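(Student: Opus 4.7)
The corollary is a direct consequence of Lemma~\ref{lem_comp}, applied twice with the roles of $u$ and $v$ interchanged. The plan is as follows.

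First, I observe that every $Q$-solution is simultaneously a $Q$-subsolution and a $Q$-supersolution; this is immediate from the definitions in \eqref{Qsol}, \eqref{Qsub}. So in our setting $u$ plays the role of a $Q$-supersolution and $v$ the role of a $Q$-subsolution (and vice versa), both of class $W^{1,p}(\Omega)$.

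Next, I verify the hypothesis $\min(u-v,0)\in W^{1,p}_0(\Omega)$ required by Lemma~\ref{lem_comp}. Set $w=u-v\in W^{1,p}_0(\Omega)$. Since $W^{1,p}_0(\Omega)$ is a lattice, i.e.\ closed under the truncation operations $w\mapsto w^{+}$ and $w\mapsto w^{-}$ (a standard fact proved by approximating $w$ by compactly supported smooth functions and noting that $t\mapsto \min(t,0)$ is $1$-Lipschitz with $\min(0,0)=0$), we have $\min(w,0)=-w^{-}\in W^{1,p}_0(\Omega)$. Applying Lemma~\ref{lem_comp} with this pair yields $u\ge v$ almost everywhere in $\Omega$.

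Then I run the argument again with the roles reversed: $v$ is a $Q$-supersolution, $u$ is a $Q$-subsolution, and $v-u=-w\in W^{1,p}_0(\Omega)$, so $\min(v-u,0)\in W^{1,p}_0(\Omega)$ by the same lattice property. Lemma~\ref{lem_comp} now gives $v\ge u$ almost everywhere in $\Omega$. Combining the two inequalities yields $u=v$ a.e. in $\Omega$, which is the claim.

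There is no real obstacle here; the only non-trivial input is the lattice property of $W^{1,p}_0(\Omega)$, which is entirely standard and holds on any open subset of a Riemannian manifold. The essential content of the uniqueness statement is already packed into Lemma~\ref{lem_comp}, and in turn into the strict monotonicity of $t\mapsto t\cA(t^{2})$ guaranteed by \eqref{Bgrowth}.
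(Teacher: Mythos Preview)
Your proof is correct and is precisely the intended argument: the paper states the corollary without proof, as an immediate consequence of Lemma~\ref{lem_comp}, and your two-fold application of the comparison principle with the roles of $u$ and $v$ swapped is exactly how one deduces it.
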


\begin{proof}[Proof of Theorem~\ref{thm:main2} assuming Theorem~\ref{thm:main3}]
Let $M,\ x_{0}\in M(\infty)$, and the families $\{\varphi_{a,c}\}$ and $\{\psi_{a,c}\}$ be as in Theorem~\ref{thm:main3}. 
Furthermore, let $\Omega_i\Subset M,\ i\in\N$, be an 
exhaustion of $M$ by $Q$-regular domains. Note that the existence of such an exhaustion is part of our assumptions on the 
operator $Q$ in Theorem~\ref{thm:main2}. For each fixed $a\in\R$ and $c>0$, let $u_{i}\in C(M)$ be the unique function that is a $Q$-solution in $\Omega_i$ 
with boundary values $\varphi_{a,c}$ and coincides with $\varphi_{a,c}$ in $\bar M\setminus\Omega_i$. By the  
comparison principle (Lemma~\ref{lem_comp}), we have $\varphi_{a,c}\le u_{i}\le\psi_{a,c}$ in $\bar M$. Thus the sequence 
$(u_{i})$ is uniformly bounded and hence, by the assumption (B) and a diagonal process,
we obtain a subsequence of $(u_{i})$ that 
converges to a function $u_{a,c}$ which is a continuous $Q$-solution in $M$, satisfies $\varphi_{a,c}\le u_{a,c}\le\psi_{a,c}$ 
in $\bar M$, and hence conditions (a)-(c) in Theorem~\ref{thm:main2}.
\end{proof}
\begin{proof}[Proof of Theorem~\ref{thm:main1} assuming Theorem~\ref{thm:main3}]
Let $M$ and $x_{0}\in M(\infty)$ be as in Theorem~\ref{thm:main3}. Condition (b) in Theorem~\ref{thm:main1}
follows from Theorem~\ref{thm:main2}.
To prove (a), suppose that $h$ is a bounded continuous $Q$-solution in $M$, $U$ is a cone 
neighborhood of $x_0$, and let
\[
b=\inf_M h,\quad\text{and}\quad B=\inf_{U\cap M} h.
\]
Then $b\le B$ and we claim that $b=B$. Write $c=B-b$ and let $\{\varphi_{a,c}\}$ and $\{\psi_{a,c}\}$, with $a\in\R$, 
be as in Theorem~\ref{thm:main3}. 
Then for each $a\in\R$ an auxiliary continuous $Q$-subsolution
\[
f_a=b+\varphi_{a,c}
\]
satisfies, for all $x\in M(\infty)\setminus\{x_{0}\}$, 
\begin{align*}
\liminf_{ \stackrel{ \mbox{\scriptsize$y\to x$} }{y\in M}}\bigl(h(y)-f_{a}(y)\bigr)
&=\liminf_{ \stackrel{ \mbox{\scriptsize$y\to x$} }{y\in M}}\bigl(h(y)-b-\varphi_{a,c}(y)\bigr)\\
&\ge \liminf_{ \stackrel{ \mbox{\scriptsize$y\to x$} }{y\in M}}\bigl(h(y)-b\bigr)-
\lim_{ \stackrel{ \mbox{\scriptsize$y\to x$} }{y\in M}}\varphi_{a,c}(y)\ge 0.
\end{align*}
Furthermore,
\begin{align*}
\liminf_{ \stackrel{ \mbox{\scriptsize$y\to x_0$} }{y\in M}}\bigl(h(y)-f_{a}(y)\bigr)
&=\liminf_{ \stackrel{ \mbox{\scriptsize$y\to x_0$} }{y\in M}}\bigl(h(y)-b-\varphi_{a,c}(y)\bigr)\\
&= \liminf_{ \stackrel{ \mbox{\scriptsize$y\to x_0$} }{y\in M}}\left(\bigl(c-\varphi_{a,c}(y)\bigr)+h(y)-B\right)\\
&\ge \liminf_{ \stackrel{ \mbox{\scriptsize$y\to x_0$} }{y\in M}}\bigl(h(y)-B\bigr)\ge 0.
\end{align*}
Hence
\begin{equation}\label{eq_hfa}
\liminf_{ \stackrel{ \mbox{\scriptsize$y\to x$} }{y\in M}}\bigl(h(y)-f_{a}(y)\bigr)\ge 0
\end{equation}
for all $x\in M(\infty)$. It follows from the comparison principle that $h\ge f_a$ in $M$ for all $a\in\R$. 
To be precise, suppose on the contrary that $h(y)<f_a(y)-\varepsilon$ for some $y\in M$ and $\varepsilon>0$.
Let $A$ be the $y$-component of the set $\{x\in M\colon h(x)<f_a(x)-\varepsilon\}$. Then $A$ is an open set with a 
compact closure $\bar A\subset M$ by \eqref{eq_hfa} and continuity of $h-f_a$. On the other hand, $h=f_a-\varepsilon$ on 
$\partial A$, and therefore $h\ge f_a-\varepsilon$ in $A$ by the comparison principle leading to a contradiction.  
Since $\lim_{a\to-\infty}\varphi_{a,c} (x)=c=B-b$ for all $x\in M$, we obtain 
\[
h(x)\ge\lim_{a\to-\infty}f_a(x)=B
\]
for all $x\in M$. Hence $b\ge B$, and so $b=B$.
\\
To complete the proof, we just apply the above to the bounded continuous $Q$-solution $-h$ and obtain
\[
\sup_M h=-\inf_M (-h)=-\inf_{U\cap M}(-h)=\sup_{U\cap M}h.
\]
\end{proof}
\begin{remark}
As is seen in the proof above, only the family $\{\varphi_{a,c}\}$ is needed in order to get the non-solvability of the 
asymptotic Dirichlet problem. 
\end{remark}
\section{Construction of $M$: First step}\label{sec:constructM} 
The construction of the Riemannian manifold $M$ is up to some minor modifications (mostly in notation) essentially 
due to Borb\'ely \cite{Bor}; see also \cite{ancrevista}, and \cite{ATU}. For the details of the construction, we refer to   \cite{H_ns}.

We start with the standard upper half space model for the hyperbolic 3-space 
\[
\hyp^{3}=\{(x^{1},x^{2},x^{3})\in\R^{3}\colon x^{3}>0\}
\] 
equipped with the hyperbolic metric $ds^{2}_{\hyp^{3}}$ of constant sectional curvature $-1$.
The sphere at infinity, $\hyp^{3}(\infty)$, can be realized as the union of the $x^{1}x^{2}$-plane and the 
''common endpoint $(x^{1},x^{2},+\infty)$`` of all vertical geodesics. Let $x_{0}=(0,0,0)\in\hyp^{3}(\infty)$ 
be a point at infinity and $L$ a unit speed geodesic terminating at $x_{0}$ ($L(+\infty)=x_{0}$) such that
$L(0)=(0,0,1)$. Thus $L$ is the positive $x^{3}$-axis. We will denote by $L$ also the image $L(\R)$. 
The Fermi coordinates $(s,r,\vartheta)$ along $L$ are defined as follows. For any point $x\in\hyp^{3}$, there exists 
a unique point $L(s)$ on $L$ closest to $x$. This determines the $s$-coordinate uniquely. The $r$-coordinate of $x$ 
is the distance $r=\dist(x,L)=d(x,L(s))$. Finally, the third Fermi coordinate $\vartheta$ of $x\in\hyp^{3}\setminus L$ 
is the angle $\vartheta\in \Sph^{1}$ obtained from the polar coordinate representation 
$x^{1}=t\cos\vartheta,\ x^{2}=t\sin\vartheta$ of $x=(x^{1},x^{2},x^{3})$. For $x=L(s)\in L$, the third Fermi 
coordinate $\vartheta$ is not defined, and we will write $x=(s,0,\ast)$. 
On $\hyp^{3}\setminus L$, the vector fields
\[
S=\frac{\partial}{\partial s},\quad
R=\frac{\partial}{\partial r},\quad \text{and}\quad
\Theta =\frac{\partial}{\partial \vartheta}
\]
form a frame, with $\{ds,dr,d\vartheta\}$ as a coframe. Furthermore, $S,\ R,$ and $\Theta$ are commuting 
as coordinate vector fields, i.e. their Lie brackets vanish:
\[
[S,R]=[S,\Theta]=[R,\Theta]=0.
\]
From now on we abbreviate $h(r)=\cosh r$ and usually write
\[
v_r'=Rv,\ v_s'=Sv,\ v_{rs}''=S(Rv),\ \text{etc.}
\]
for partial derivatives of a function $v$.

The (standard) hyperbolic metric of $\hyp^{3}$ in Fermi coordinates is given by
\[
ds^{2}_{\hyp^{3}}=dr^{2}+h^{2}(r)\,ds^{2}+\sinh^{2}r\,d\vartheta^{2}.
\]
The Riemannian manifold $M$ is then obtained from $\hyp^{3}$ by modifying the metric in $\Theta$-directions as
\begin{equation}\label{eq:defmetric}
ds^{2}_{M}=dr^{2}+h^{2}(r)\,ds^{2}+g^{2}(s,r)\,d\vartheta^{2},
\end{equation}
where $g\colon \R\times [0,+\infty[\to\R$ is a $C^{\infty}$-function which is positive in the complement of $L$, 
that is when $r>0$, 
\begin{align*}
g(s,0)&=0,\\ 
g_{r}(s,0)&:=\frac{\partial g}{\partial r}(s,0)=1,\\
\end{align*}
and whose partial derivatives of even order with respect to $r$ vanishes at $r=0$.
Thus, with respect to the Riemannian metric $ds^{2}_{M}$, we have
\begin{equation}\label{innerpr}
\langle R,S\rangle =\langle R,\Theta\rangle=\langle S,\Theta\rangle=0,\
\langle R,R\rangle =1,\ \langle\Theta,\Theta\rangle=g^2,\text{ and } \langle S,S\rangle=h^2.
\end{equation}
Above and in what follows $\langle\cdot ,\cdot\rangle$ refers to the Riemannian metric of $M$.
Furthermore, for later purposes we record the covariant derivatives of the coordinate vector fields obtained from 
\eqref{innerpr} by a direct computation:
\begin{equation}\label{covariantder}
\begin{split}
\nabla_{R}R &=0,\  \nabla_{R}S=\nabla_{S}R=\tfrac{h_r'}{h}S,\
\nabla_{R}\Theta=\nabla_{\Theta}R=\tfrac{g_r'}{g}\Theta,\ \nabla_{S}S=-hh_r' R,\\ 
\nabla_{S}\Theta&=\nabla_{\Theta}S=\tfrac{g_s'}{g}\Theta,\text{ and }
\nabla_{\Theta}\Theta=-gg_r' R-\tfrac{gg_s'}{h^2}S.
\end{split}
\end{equation}

It is crucial to note that all geodesic rays of $\hyp^3$ starting at $L$ will remain geodesic rays also in $M$, and 
therefore the sphere at infinity, $M(\infty)$, of $M$ and the cone topology of $\bar M$ can be identified with those 
of $\hyp^3$.
The Riemannian manifold $M$ will then be of sectional curvature $\le -1$ if and only if the following four 
inequalities hold:
\begin{align}
\frac{h_{rr}''}{h}&\ge 1,\label{A>1}\\
\frac{g_{rr}''}{g}&\ge 1,\label{B>1}\\
\frac{g_{ss}''}{gh^2}+\frac{g_r'h_r'}{gh}&\ge 1,\label{C>1}\\
\left(-\frac{g_{rs}''}{gh}+\frac{g_s'h_r'}{gh^2}\right)^2 &\le
\left(\frac{g_{rr}''}{g}-1\right)\left(\frac{g_{ss}''}{gh^2}+\frac{g_r'h_r'}{gh}-1\right);\label{crossterm}
\end{align}
see \cite{H_ns}.
The first condition \eqref{A>1} holds as an equality since $h(r)=\cosh r$. Thus it suffices to verify conditions \eqref{B>1} and \eqref{crossterm}.

\section{The operator $Q$ for functions $\varphi_{a,c}$}\label{sec:Q-varphi}
The family $\{\varphi_{a,c}\}$ in Theorem~\ref{thm:main3} will be constructed following the idea of Borb\'ely in \cite{Bor}. 
For $c=0$ these functions vanishes identically, therefore we assume from now on that $c>0$.
We consider a family of vector fields 
\[
X^{a}=R+q_{a}S,\quad a\in\R,
\] 
on $M\setminus L$, where, for each $a\in\R$, $q_{a}\colon M\to\R$ is a $C^{\infty}$ function depending 
only on the $r$-coordinate of a point $(s,r,\vartheta)\in M\setminus L$ and $q_{a}\vert L=0$. 
From now on we usually omit the parameter $a$ and abbreviate $X=X^{a}$, and write $q(r)=q_{a}(r)=
q_{a}(s,r,\vartheta)$. 
All integral curves of $X$ can be extended to $L$, and therefore we will talk about integral curves of $X$ 
starting at a point of $L$ even though $X$ is not defined on $X$; see \cite{H_ns} for details.
Since $X$ does not have the $\Theta$-component, the (Fermi) $\vartheta$-coordinate remains constant along integral 
curves of $X$. Furthermore, integrals curves of $X$ starting at $L(s)$ are 
rotationally symmetric around $L$; each of them is obtained from another by a suitable rotation around $L$. 
Denote by $\gamma_{a,s}$ any integral curve of $X^{a}$ starting at $L(s)$. 
Let $S_{s}^{a}$ be the surface that is obtained by rotating any $\gamma_{a,s}$ 
around $L$. 
Note also that the relation between the (Fermi) $s$-coordinate of a point $(s,r,\vartheta)\in S_{s_0}^{a}$ and 
$s_0$ is given by 
\begin{equation}\label{relation_s_0s}
s=s_0+\int_{0}^{r}q_a(t)\,dt.
\end{equation}
The functions $\varphi_{a,c}$ are constructed so that the surfaces $S_{s}^{a}$ are the level sets of 
$\varphi_{a,c}$. Thus $\varphi_{a,c}\vert S_{s}^{a}$ has a constant value $f(s)=f^{(a,c)}(s)$ 
depending only on $a,\ c$, and $s$. It is convenient to choose
\begin{equation}\label{fdef}
f(s)=f^{(a,c)}(s)=c\,\max\bigl\{0,\tanh\bigl(\delta(s-a)\bigr)\bigr\},
\end{equation}
where $\delta=\tfrac{1}{2(1+2B_{0})}$ and $B_{0}$ is the constant in \eqref{Bgrowth}. Hence $\varphi_{a,c}\vert M\setminus M_{a}=0$, where $M_{a}$ is the open set
\[
M_{a}=\bigcup_{s> a}S_s^{a}.
\]
It is worth observing that surfaces $S_s^a$ for fixed $a$ are obtained from each other by a Euclidean dilation with 
respect to $x_0$ in our upper half space model of $M$ since $q_a$ is independent of the $s$-coordinate. More precisely, 
$M_a=\{tz\colon t\in (0,1),\ z\in S_s^{a}\}$, where $tz$ stands for the (Euclidean) dilation of $z$ with respect to $x_0$. 
The functions $q=q_{a}$ will be constructed in such a way that they result in smooth functions 
$\varphi=\varphi_{a,c}$ in $M_{a}$. As in \cite{Bor} and \cite{H_ns}, we have
\begin{align}
\varphi_{s}'(s',r,\vartheta)&=f'(s),\label{Sphipositive}\\
\nabla\varphi(s',r,\vartheta)&=f'(s)\bigl(\cosh^{-2}rS-q(r)R\bigr),\label{nablaphi}\\
\abs{\nabla\varphi(s',r,\vartheta)}&=f'(s)\sqrt{\cosh^{-2}r+q^{2}(r)},\label{normnablaphi}\\
\noalign{and}
\varphi_{ss}''(s',r,\vartheta)&=f''(s),\label{SSphinegative}
\end{align}
where the (Fermi) coordinate $s'$ is related to $s$ by
\[
s'=s+\int_0^r q(t)\,dt.
\]
Note that $\abs{\nabla\varphi_{a,c}}>0$ in $M_{a}$.
Next we will compute
\[
Q[\varphi_{a,c}]=
\diver\cA(\abs{\nabla\varphi_{a,c}}^{2})\nabla\varphi_{a,c}
\]
pointwise in $M_{a}$. We start with noting that, for a $C^{2}$-function $u$ (with  $\abs{\nabla u}>0$),
\begin{align}\label{qhess}
\nonumber
\diver\cA(\abs{\nabla u}^{2})\nabla u
&=\cA(\abs{\nabla u})^{2}\Delta u + \langle\nabla\cA(\abs{\nabla u}^{2}),\nabla u\rangle \\
\nonumber
&=\cA(\abs{\nabla u})^{2}\Delta u + \cA'(\abs{\nabla u}^{2})\left\langle\nabla\langle \nabla u,\nabla u\rangle,
\nabla u\right\rangle\\
&=\cA(\abs{\nabla u})^{2}\Delta u + 2\cA'(\abs{\nabla u}^{2})\Hess u(\nabla u,\nabla u) \\
\nonumber
&=\cA(\abs{\nabla u})^{2}\left\lbrace \Delta u + 
2\cB(\abs{\nabla u}^{2})\abs{\nabla u}^{2}\Hess u\left(\tfrac{\nabla u}{\abs{\nabla u}},\tfrac{\nabla u}{\abs{\nabla u}}\right)
\right\rbrace.
\end{align}
In particular, we have 
\[
\diver\cA(\abs{\nabla\varphi}^{2})\nabla\varphi
=\cA(\abs{\nabla\varphi})^{2}\left\lbrace \Delta\varphi + 
2\cB(\abs{\nabla\varphi}^{2})\abs{\nabla\varphi}^{2}\Hess\varphi\left(\tfrac{\nabla\varphi}{\abs{\nabla\varphi}},
\tfrac{\nabla\varphi}{\abs{\nabla\varphi}}\right)
\right\rbrace
\]
for functions $\varphi=\varphi_{a,c}$ in $M_{a}$.

Following Borb\'ely, we define a $C^{\infty}$-function $\beta\colon M\to [0,\infty)$ 
(denoted by $p$ in \cite{Bor}) by
\begin{equation}\label{betadef}
\beta(s,r)=\frac{g_s'(s,r)}{g_r'(s,r)h^2(r)}.
\end{equation}
Writing $Y=\tfrac{\nabla\varphi}{\abs{\nabla\varphi}}$ and computing the Laplacian as the trace of the Hessian
in the basis $\{X,\Theta,Y\}$, cf. \cite[pp. 233-234]{Bor} and \cite{H_ns}, we obtain
\begin{align*}
\diver\cA(\abs{\nabla\varphi}^{2})\nabla\varphi
 =\cA(\abs{\nabla\varphi})^{2}\Bigl\lbrace & \frac{\Hess\varphi(X,X)}{\langle X,X\rangle} + 
\frac{\Hess\varphi(\Theta,\Theta)}{\langle \Theta,\Theta\rangle} \\
& +
\left(1+2\cB(\abs{\nabla\varphi}^{2})\abs{\nabla\varphi}^{2}\right)
\Hess\varphi(Y,Y)
\Bigr\rbrace,\\
\end{align*}
where the Hessians are obtained from \eqref{covariantder} by simple computations:
\begin{align*}
\frac{\Hess\varphi(X,X)}{\langle X,X\rangle} &=
\frac{-\varphi_s'\left(hq_r'+2h_r'q+h^2h_r'q^3\right)}{h(1+h^2q^2)},\\
\frac{\Hess\varphi(\Theta,\Theta)}{\langle \Theta,\Theta\rangle}  &=
\frac{\varphi_s'g_r'(\beta-q)}{g},\\
\noalign{and}
\Hess\varphi(Y,Y)&=\varphi_{ss}''(h^{-2}+q^2)
-\frac{\varphi_s'(q_r'q^2-h_r'h^{-3}q)}{h^{-2}+q^{2}}.
\end{align*}
Hence putting these together and simplifying we arrive at the following formula.
\begin{Lem}\label{Qphi}
The operator $Q$ for functions $\varphi=´\varphi_{a,c}$ is given in $M_{a}$ by the formula
\begin{align*}
\diver\cA(\abs{\nabla\varphi}^{2})\nabla\varphi & \\
=\frac{\cA(\abs{\nabla\varphi}^{2})\varphi_s'}{h}
&\Bigl\{
\frac{g_r'h(\beta-q)}{g} - hq_r' - h_r'q +\frac{\varphi_{ss}''(1+h^2q^2)}{\varphi_s'h}
\left[ 1+ 2\cB(\abs{\nabla\varphi}^{2})\abs{\nabla\varphi}^{2}\right]\\
&-2\cB(\abs{\nabla\varphi}^{2})\abs{\nabla\varphi}^{2}\frac{h^3q_r'q^2-h_r'q}{1+h^2q^2}\Bigr\}.
\end{align*}
\end{Lem}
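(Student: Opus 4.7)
The plan is to start from the display immediately preceding the statement, which expresses $\diver\cA(|\nabla\varphi|^{2})\nabla\varphi$ as $\cA(|\nabla\varphi|^{2})$ times a sum of three weighted Hessian terms in the orthogonal frame $\{X,\Theta,Y\}$. Orthogonality of this frame is immediate: $\langle X,\Theta\rangle=0$ by \eqref{innerpr}, while $\nabla\varphi=f'(s)(h^{-2}S-qR)$ from \eqref{nablaphi} has no $\Theta$-component and pairs to zero with $R+qS$. Since the three Hessian coefficients have already been written out via \eqref{covariantder}, the proof of the lemma reduces to a systematic algebraic simplification.

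Concretely, I would factor $\cA(|\nabla\varphi|^{2})\varphi_s'/h$ out of the full expression and process each summand separately. The $\Theta\Theta$-term contributes $g_r'h(\beta-q)/g$ directly. The $\varphi_{ss}''$-part of $\Hess\varphi(Y,Y)$, multiplied by $1+2\cB|\nabla\varphi|^{2}$ and rewritten via $h^{-2}+q^{2}=(1+h^{2}q^{2})/h^{2}$, yields the $\varphi_{ss}''$-term of the stated formula. What remains is to handle the non-$\varphi_{ss}''$ pieces, which come from the $XX$-term and from $(1+2\cB|\nabla\varphi|^{2})$ times the $\varphi_s'$-part of $\Hess\varphi(Y,Y)$.

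The central point, and where I expect the only substantive step, is a cancellation. Using the identity
\[
\frac{hq_r'+2h_r'q+h^{2}h_r'q^{3}}{1+h^{2}q^{2}}=(hq_r'+h_r'q)+\frac{h_r'q-h^{3}q_r'q^{2}}{1+h^{2}q^{2}},
\]
the $XX$-contribution is seen to be $-(hq_r'+h_r'q)$ plus a remainder $-(h_r'q-h^{3}q_r'q^{2})/(1+h^{2}q^{2})$. After converting $1/(h^{-2}+q^{2})$ into $h^{2}/(1+h^{2}q^{2})$ and pulling out the $1/h$ factor, the \textbf{1} in $1+2\cB|\nabla\varphi|^{2}$, applied to the $\varphi_s'$-part of $\Hess\varphi(Y,Y)$, contributes $-(h^{3}q_r'q^{2}-h_r'q)/(1+h^{2}q^{2})$, which is the negative of the remainder; hence the two cancel. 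The remaining $2\cB|\nabla\varphi|^{2}$ piece then reproduces exactly the final term in the claimed formula.

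The only genuine obstacle is bookkeeping: keeping signs straight, converting consistently between the two natural normalizations $h^{-2}+q^{2}$ and $(1+h^{2}q^{2})/h^{2}$, and verifying that the cancellation in the previous paragraph is exact. No further geometric ingredient beyond the covariant-derivative formulas \eqref{covariantder} and the three Hessian computations already given is needed.
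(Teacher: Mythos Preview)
Your proposal is correct and follows exactly the paper's route: both start from the frame $\{X,\Theta,Y\}$ decomposition and the three Hessian formulas already displayed, with the lemma following by algebra. The paper leaves that algebra as ``putting these together and simplifying,'' whereas you spell out the key cancellation identity explicitly; your verification of that cancellation is accurate.
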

\begin{remark}\label{beta-q}
It is worth noting already at this stage that, in order to have $Q[\varphi]\ge 0$, the first term above, i.e.
the one containing $\beta-q$, should be positive and dominate the others. This requirement puts strong constraints on 
functions $\beta,\ g,$ and $q$.
\end{remark}
\begin{remark}\label{q-prop}
In order to guarantee the correct boundary behavior of the functions $\varphi_{a,c}$, i.e. conditions (a) and (b') in 
Theorem~\ref{thm:main3}, it is enough to construct functions $q_a$ so that
\begin{equation}\label{qehto1}
\int_{0}^{\infty}q_a(t)\,dt=\infty
\end{equation}
for all $a\in\R$ and that 
\begin{equation}\label{qehto2}
\int_{0}^{r}q_a(t)\,dt\le b_r<\infty
\end{equation}
independently of $a\in\R$; see \cite[Lemma 5.1]{H_ns}.  
\end{remark}

\section{Construction of $M$: Final step}\label{sec:construct-g} 
In this section we briefly describe the construction of the function $g$ in \eqref{eq:defmetric} and hence the Riemannian metric 
of $M$. The function $g$ will be of the form
\begin{equation}\label{eq:g-def}
g(s,r)=\frac 12\sinh\bigl(\sinh 2\varrho(s,r)\bigr),
\end{equation}
where $\varrho$ is a $C^{\infty}$-function, with $\varrho(s,r)=r$ for $0\le r\le 3$ and 
$\varrho(s,r)\ge r$ for all $r\ge 0$.
By \eqref{betadef}, $g$ and $\varrho$ both satisfy the partial differential equation
\begin{align}
g_s'&=\beta h^2 g_r',\label{gPDE}\quad\text{and}\\
\varrho_s'&=\beta h^2 \varrho_r'.\label{rhoPDE}
\end{align}
Note that $\beta$ is independent of the (Fermi) coordinate $\vartheta$ and $\beta(s,r)=0$ for 
$0\le r\le 3$ by \eqref{eq:g-def}. 
Since $\nabla\varrho=\varrho_r'(\beta S+R)$, we have 
$\nabla\varrho\perp (\beta h^{2}R-S)$, and therefore $\varrho$ (and hence $g$) is constant along any integral curve of the vector field
\[
Z=\beta h^{2}R-S.
\] 
Now the idea is to construct an unbounded domain $\Omega\subset M$ of the form
\begin{equation}\label{omegadef}
\Omega=\{(s,r,\vartheta)\in M\colon r<3 \}\cup \{(s,r,\vartheta)\in M\colon s< -\ell(r) \}
\end{equation}
such that all integral curves of $Z$ will enter at $\Omega$, and then construct
$\beta$ so that it vanishes identically in $\Omega$, and finally fix the ''initial condition``
\begin{equation}\label{varrhodef}
\varrho(s,r)=r
\end{equation}
for all $(s,r,\vartheta)\in\Omega$. Note that $(s,r,\vartheta)\in\Omega$ for all $s\le s'$ if 
$(s',r,\vartheta)\in\Omega$. Consequently, once an integral curve of $Z$ enters at $\Omega$, it will then stay in 
$\Omega$ forever. The function $\ell$ that appears in \eqref{omegadef} is closely related to $\beta$.
Then $g$, and hence the Riemannian structure of $M$, will be completely determined by constructing the functions 
$\beta$ and $\ell$. 

While constructing $\beta$ we have to keep in mind Remark~\ref{beta-q} and \eqref{qehto1}. This leads to the first requirement that
\begin{equation}\label{betaehto1}
\int_0^{\infty}\beta(s,r)\,dr=\infty
\end{equation}
for all $s\in\R$. For the construction of $g$ we require that 
\begin{equation}\label{betaehto0}
\int_{r_0}^{\infty}\dfrac{dr}{\beta(s,r)\cosh^2 r}=\infty
\end{equation}
for all $r_0>3$ and $s\in\R$. To obtain the curvature conditions \eqref{B>1} and \eqref{crossterm} we will require 
that 
\begin{equation}\label{betaehto2}
0\le\beta\le \frac{1}{1000},\quad\abs{\beta_r'}\le\frac{1}{1000},\quad
0\le \beta_s'\le\frac{1}{1000},\quad\beta\beta_r'h^3\le\frac{h_r'}{1000},
\end{equation}
and that $\beta h^2$ is a convex non-decreasing function in the variable $r$, that is
\begin{equation}\label{betaehto3}
(\beta h^2)_r'\ge 0\quad\text{and}\quad (\beta h^2)_{rr}''\ge 0.
\end{equation}
The function $\beta\colon M\to [0,\infty)$ will be of the form
\[
\beta(s,r)=\xi(s+\ell(r))\beta_0(r),
\]      
with smooth functions $\xi,\ \ell,$ and $\beta_0$ to be described next.
The function $\xi\colon\R\to [0,1]$ is smooth and non-decreasing such that
$\xi\vert (-\infty,0]=0,\ \xi\vert [4,\infty)=1,$ and that $\xi',\abs{\xi''}<1/2,$ and $\xi''+\xi>0$ on $(0,4)$.
The smooth function $\beta_0\colon [0,\infty)\to [0,\infty)$ is constructed such that on $[0,r_1]$ it is a 
smooth non-decreasing function that vanishes identically on $[0,3],$ takes the constant (positive) value 
$\beta_0(5)$ on $[5,r_1],$ and it is a positive slowly increasing function on the interval $(3,5]$ so that 
\eqref{betaehto2} and \eqref{betaehto3} hold. Here $r_1$ is large enough such that 
$\beta_0(r_1)\cosh^2 r_1=\beta_0(5)\cosh^2 r_1>1$. Furthermore, $\beta_0$ is non-increasing on $[r_1,\infty)$, with 
$\lim_{r\to\infty}\beta_0(r)=0$, whereas $\beta_0h^2$ is an 
increasing strictly convex function. Finally, $\beta_0$ satisfies
\[
\int_0^{\infty}\beta_0(r)\,dr=\infty,\quad
\int_{r_0}^{\infty}\dfrac{dr}{\beta_0(r)\cosh^2 r}=\infty
\]
for all $r_0>3$, and
\begin{equation}\label{betah2''}
(\beta_0h^2)_{rr}''>\dfrac{\varepsilon}{\beta_0h^2},
\end{equation} 
where $0<\varepsilon<1/4$ is small enough depending on the choice of $\beta_0\vert [0,5]$. We refer to \cite{Bor} for a detailed 
construction of $\beta_0$ (denoted there by $p_0$); see also \cite{H_ns}.

The smooth function $\ell\colon [0,\infty)\to [0,\infty)$ is constructed so that $\ell(r)=0$ for $r\in [0,3]$ and
\begin{equation}\label{ell'}
\ell'=\dfrac{\varepsilon}{\beta_0h^2 }
\end{equation}
on the interval $[5,\infty)$, with the same $\varepsilon$ as in \eqref{betah2''}. Finally, the two pieces are connected 
smoothly such that 
\[
\ell''\ge\dfrac{-\ell'(\beta_0h^2)'_r}{\beta_0h^2}\quad\text{and}\quad
0\le\ell'\le\dfrac{\varepsilon}{\beta_0h^2}
\]
for all $r>0$. Then $\ell(r)\to\infty$ as $r\to\infty$ and $\beta(s,r)=\xi(s+\ell(r))\beta_0(r)$ satisfies the conditions 
\eqref{betaehto1}--\eqref{betaehto3}; see \cite{Bor} for the details.

Next we complete the construction of $g$. Recall from \eqref{omegadef} and \eqref{varrhodef} that
\[
\Omega=\{(s,r,\vartheta)\in M\colon r<3\}\cup\{s,r,\vartheta)\in M\colon s<-\ell(r)\}
\]
and hence $\beta\equiv 0$ and $g(s,r)=\tfrac 12 \sinh(\sinh 2r)$ in $\bar\Omega$ and $\beta>0$ in 
$M\setminus\bar\Omega$. Notice that integral curves of $W=R-\ell'S$ starting at points in 
$\partial\Omega\cap\{(s,r,\vartheta)\in M\colon r>3\}$ will stay in $\partial\Omega$. Since
\[
\dfrac{1}{\beta h^2}-\ell'\ge \dfrac{1}{\beta_0 h^2}-\dfrac{\varepsilon}{\beta_0 h^2}=\dfrac{1-\varepsilon}{\beta_0 h^2}
\]
we conclude from \eqref{betaehto0} that all integral curves of $Z=\beta h^2R-S$ starting at points in
$M\setminus\bar\Omega$ will enter at $\Omega$ and stay in there,  see \cite[p. 229]{Bor}. 
As observed earlier, $\varrho$ and $g$ are constant along any integral curves of $Z$. 
This completes the construction of $g$ and the Riemannian metric of $M$. We refer to \cite{H_ns} for the proof of  
the curvature conditions \eqref{B>1} and \eqref{crossterm}. 

We finish this section by collecting further properties of $g$ that will be used in Section~\ref{sec:construct-q}.
Recall from \eqref{eq:g-def} and \eqref{rhoPDE} that
\[
g(s,r)=\frac 12\sinh\bigl(\sinh 2\varrho(s,r)\bigr),
\]
where $\varrho$ is a $C^{\infty}$-function, with $\varrho(s,r)=r$ for $0\le r\le 3$, that 
satisfies the partial differential equation
\[
\varrho_s'=\beta h^2 \varrho_r'.
\]
Hence we may apply the proof of \cite[Lemma 2.2]{Bor} to the function $\varrho$. 
Since $\varrho_r'(s,r)\equiv 1$ in $\Omega$, 
we get  
\begin{equation}\label{rho'}
\varrho_r'\ge 1
\end{equation}
and
\begin{equation}\label{rho}
\varrho(s,r)\ge r
\end{equation}
in $M$. 
Furthermore, 
\begin{equation}\label{g_r_est}
g_r'=\varrho_r'\cosh(2\varrho)\cosh(\sinh 2\varrho)
\end{equation}
and
\begin{equation}\label{g_r/g_est}
\frac{g_r'}{g}=2\varrho_r'\coth(\sinh 2\varrho)\cosh 2\varrho.
\end{equation}

\section{Construction of $Q$-subsolutions $\varphi_{a,c}$}\label{sec:construct-q}   
In this section we construct the functions $q_a\colon [0,\infty)\to\R,\ a\in\R$, so that the resulting functions
$\varphi_{a,c}$ satisfy the conditions in Theorem~\ref{thm:main3}. For each fixed $a\in\R$, we first define $q=q_a$ 
piecewise on intervals $[0,T_0],\ [T_0,T_1],\ [T_1,T_2],\ [T_2,T_3]$, and $[T_3,\infty)$, where $T_0,\ldots,T_3$ 
depend only on $a$ and $B_{0}$, and then finally smooth out $q$ in neighborhoods of $T_i,\ i=0,1,2,3$. We denote both the piecewisely 
constructed functions and the final smooth functions by the same symbol $q$. 

Recall from \eqref{fdef} that 
\[
f(s)=f^{(a,c)}(s)=c\,\max\bigl\{0,\tanh\bigl(\delta(s-a)\bigr)\bigr\}.
\]
with $\delta=\tfrac{1}{2(1+2B_{0})}$. Furthermore, by \eqref{Sphipositive} and \eqref{SSphinegative}, we have
\[
\varphi_s'(s',r,\vartheta)=f'(s) > 0
\]
and
\[
\varphi_{ss}''(s',r,\vartheta)=f''(s) < 0
\]
for $s>a$, where 
\[
s'=s+\int_{0}^{r}q(t)\,dt.
\]
Hence
\[
\frac{\varphi_{ss}''(1+h^2q^2)}{\varphi_s'h}
\left[ 1+ 2\cB(\abs{\nabla\varphi}^{2})\abs{\nabla\varphi}^{2}\right]
\ge -\frac{1+h^2q^2}{h}
\]
in $M_a$. We conclude from Lemma~\ref{Qphi} that $Q[\varphi]>0$ in $M_a$ if 
\begin{equation}\label{Qsubphi}
\frac{g_r'h(\beta-q)}{g} - hq_r' - h_r'q  -\frac{1+h^2q^2}{h}
-2\bar{B}_0\frac{\abs{h^3q_r'q^2-h_r'q}}{1+h^2q^2}>0
\end{equation}
in $M_a$, where $\bar{B}_0=\max(B_{0},1/2)$.

It is straightforward to check that integral curves of vector fields $R-\tanh r\, S,\ r>0,$ are horizontal (Euclidean) 
lines,
i.e. the $x^3$-coordinate remains constant along an integral curve. Hence we define $q(r)=q_a(r)=-\tanh r$ for 
$r\in [0,T_0]$, where $T_0\ge 1$ will be chosen later. Then the surfaces $S_s^{a}$ coincide with horizontal Euclidean 
planes $x^3\equiv e^{-s}$ 
near $L$. Consequently, the functions $\varphi_{a}$ are smooth in $M_a$. We notice that 
\[
q_r'=-\cosh^{-2}r,\quad
1+h^2q^2=\cosh^2 r,\quad
-hq_r'-h_r'q=\cosh r,\\
\]
and
\[
\frac{h^3q_r'q^2 -h_r'q}{1+h^2q^2}=0.
\]
Furthermore, since $\beta\ge 0$, we get from \eqref{g_r/g_est} and \eqref{Qsubphi} that 
$Q[\varphi]>0$ in $M_a\cap\{(s',r,\vartheta)\colon 0<r< T_0\}$ because there
\begin{align}\label{q1}
&\frac{g_r'h(\beta-q)}{g} - hq_r' - h_r'q  -\frac{1+h^2q^2}{h}
-2\bar{B}_0\frac{\abs{h^3q_r'q^2-h_r'q}}{1+h^2q^2}\\
\nonumber
\ge &
\coth(\sinh 2\varrho)\sinh(2\varrho) \coth(2\varrho)2\varrho\varrho_r'\frac{\sinh r}{\varrho}
\ge\  \varrho_r'\frac{\sinh r}{\varrho}>0.
\end{align}
Since $\varrho=r$ and hence $\varrho_r'=1$ for $0\le r \le 3$, we have $\varrho_r'\varrho^{-1}\sinh r\to 1$ as 
$r\to 0$. 

For $r\in [T_0,T_1]$, we define
\[
q(r)=q_a(r)=\frac{-\cosh T_0 \sinh r}{\cosh^2 r}.
\]
Then 
\begin{align*}
& q_r'(r)=\cosh T_0(\sinh^2 r-1)\cosh^{-3}r,\quad
1+h^2q^2=1+\cosh^2 T_0\tanh^2 r,\\
&-hq_r'-h_r'q=\cosh T_0\cosh^{-2}r,
\end{align*}
and
\begin{align*}
\frac{\abs{h^3q_r'q^2 -h_r'q}}{1+h^2q^2} &=\frac{\cosh^3T_0\tanh^2r(\tanh^2r-\cosh^{-2}r)+\cosh T_0\tanh^2r}
{1+\cosh^2 T_0\tanh^2 r}\\
&\le \cosh T_0.
\end{align*}
Again since $\beta\ge 0$, we may estimate the left hand side of \eqref{Qsubphi} from below to obtain
\begin{align}\label{q2}
\nonumber
\frac{g_r'h(\beta-q)}{g} & - hq_r' - h_r'q  -\frac{1+h^2q^2}{h}
-2\bar{B}_0\frac{\abs{h^3q_r'q^2-h_r'q}}{1+h^2q^2}\\
 \ge\ \cosh T_0 &\Bigl(2\varrho_r'\coth(\sinh 2\varrho)\cosh(2\varrho)\tanh r + \cosh^{-2}r\\
\nonumber
&-\frac{1+\cosh^2 T_0\tanh^2 r}{\cosh T_0\cosh r} - 2\bar{B}_0 \Bigr)\\
\nonumber
> \cosh T_0 &\Bigl(\cosh 2r +\cosh^{-2}r-\cosh^{-2}T_0 -\tanh^2 r-2\bar{B}_0\Bigr)
>0
\nonumber
\end{align}
in $M_a\cap\{(s',r,\vartheta)\colon T_0 < r < T_1\}$, where $T_0=T_0(B_{0})\ge 1$ is large enough. 

For $r\in [T_1,T_2]$, we let $q=q_a$ be a $C^{\infty}$ continuation of $q\vert[0,T_1]$ such that 
\begin{equation*}
\frac{-\cosh T_0 \sinh r}{\cosh^2 r} \le  q \le 0
\end{equation*}
and
\begin{equation*}
0<\left(\frac{-\cosh T_0 \sinh r}{\cosh^2 r}\right)^{\prime}_r  < q_r' <
\frac{\cosh T_0}{\cosh r}.
\end{equation*}
Thus
\begin{align*}
-hq_r' - h_r' &\ge -\cosh T_0,\\
1\le 1+h^2q^2 &\le 1+\cosh^2 T_0\tanh^2 r,\\
-\frac{1+h^2q^2}{h} &\ge -\frac{1+\cosh^2 T_0\tanh^2 r}{\cosh r},\\
\noalign{and}
-2\bar{B}_0\frac{\abs{h^3q_r'q^2 -h_r'q}}{1+h^2q^2} &= -2\bar{B}_0\frac{h^3q_r'q^2 -h_r'q}{1+h^2q^2} \\
 &\ge -2\bar{B}_0(h^3q_r'q^2 -h_r'q)\\
 &\ge -2\bar{B}_0\cosh T_0\tanh^{2}r(\cosh^2 T_0+1).
\end{align*}
We choose $T_1=T_1(a,T_0)>T_0$ large enough so that $s'+\ell(r)\ge 4$ for all 
$s'\ge a-\log\cosh T_0 -1$ and $r\ge T_1$, 
which then implies that for all $s\ge a$ and $r\in [T_1,T_2]$ the point $(s',r,\vartheta)$ on any 
integral curve $\gamma_{a,s}$ of $X^{a},$ with 
\[
s'=s+\int_{0}^{r}q(t)\,dt\ge a-\log\cosh T_0 -1,
\] 
lies in the set where $\beta(s',r)=\beta_0(r)$. Furthermore, we also require that $T_1$ is so large that 
$\beta_0(r)\cosh^2 r\ge 1$ 
for all $r\ge T_1.$ Then in $M_a\cap\{(s',r,\vartheta)\colon T_1< r< T_2\}$, with $T_1$ large enough, we have
\begin{align}\label{q3}
\nonumber
\frac{g_r'h(\beta-q)}{g} &- hq_r' - h_r'q  -\frac{1+h^2q^2}{h}
-2\bar{B}_0\frac{\abs{h^3q_r'q^2-h_r'q}}{1+h^2q^2}\\
\nonumber
\ge\ &
2(\beta_0-q)\varrho_r'\coth(\sinh 2\varrho)\cosh(2\varrho)\cosh r -\cosh T_0\\
& -\frac{1+\cosh^2 T_0 \tanh^2 r}{\cosh r}
-2\bar{B}_0\cosh T_0\tanh^2 r(\cosh^2 T_0+1)\\
\nonumber
\ge\ &
2\cosh 2r \cosh^{-1}r -\cosh T_0 -\frac{1+\cosh^2 T_0 \tanh^2 r}{\cosh r}\\
\nonumber
 & 
-2\bar{B}_0\cosh T_0\tanh^2 r(\cosh^2 T_0+1) > 0.
\nonumber
\end{align}
Here we used estimates $\beta_0-q\ge \cosh^{-2}r$ and $2\varrho_r'\coth(\sinh 2\varrho)\cosh(2\varrho)\ge 2\cosh 2r$ for 
$r\ge T_1$.
The upper interval bound $T_2$ is determined by $q(T_2)=0$. Such $T_2$ exists since $q$ grows strictly faster than 
\[
r\mapsto \frac{-\cosh T_0 \sinh r}{\cosh^2 r}
\]
which tends to zero as $r\to\infty$. 
Since
\[
\int_{t}^{\infty}\beta_0(r)\,dr=\infty\quad
\text{and}\quad
\int_{t}^{\infty}\frac{dr}{\beta_0(r)\cosh^2 r}=\infty
\]
for every $t>3$, $\beta_0(r)-1/\cosh r$ changes its sign infinitely often, in particular, there are arbitrary large 
values of $r$, with $\beta_0(r)-1/\cosh r=0$. We let $T_3>T_2$ be a zero of $\beta_0-1/\cosh$ specified later.
For $r\in [T_2,T_3]$ we let $q(r)=0$. Then 
\begin{align}\label{q4}
\nonumber
\frac{g_r'h(\beta-q)}{g} &- hq_r' - h_r'q  -\frac{1+h^2q^2}{h}
-2\bar{B}_0\frac{\abs{h^3q_r'q^2-h_r'q}}{1+h^2q^2}\\
=\ & \frac{g_r'h\beta_0}{g}-\frac{1}{h}
\ge \frac{2\cosh 2r -1}{\cosh r}>0
\end{align}
in $M_a\cap\{(s',r,\vartheta)\colon T_2< r< T_3\}.$

For $r\ge T_3$ we define $q(r)=\beta_0(r)-1/\cosh r$. Then   
\begin{align*}
\beta-q &=\beta_0-q=1/\cosh r,\\
-hq_r'-h_r'q&=-\beta_0'(r)\cosh r -\beta_0(r)\sinh r,\\
-\frac{1+h^2q^2}{h}&=-\beta_0^2(r) \cosh r + 2\beta_0(r)-2\cosh^{-1}r,\\
\noalign{and}
-2\bar{B}_0\frac{\abs{h^3q_r'q^2-h_r'q}}{1+h^2q^2} &\ge
-2\bar{B}_0\frac{\abs{h^3q_r'q^2}+\abs{h_r'q}}{1+h^2q^2}\\ &\ge 
-2\bar{B}_0\left(\abs{\beta_0'(r)}\cosh r +\beta_0(r)\sinh r +2\tanh r\right).
\end{align*}
Hence
\begin{align}\label{q5}
\nonumber
\frac{g_r'h(\beta-q)}{g} &- hq_r' - h_r'q  -\frac{1+h^2q^2}{h}
-2\bar{B}_0\frac{\abs{h^3q_r'q^2-h_r'q}}{1+h^2q^2}\\
\nonumber
\ge\ &
2\cosh 2r-\beta_0'(r)\cosh r-\beta_0(r)\sinh r\\
& -\beta_0^2(r)\cosh r+2\beta_0(r)-2\cosh^{-1} r\\
\nonumber
&-2\bar{B}_0\left(\abs{\beta_0'(r)}\cosh r +\beta_0(r)\sinh r +2\tanh r\right) >0
\nonumber
\end{align}
in $M_a\cap\{(s',r,\vartheta)\colon r>T_3\}$ if $T_3$ is large enough. Finally, since the estimates in 
\eqref{q1}-\eqref{q5} involve $q$ and 
$q_r'$ but not higher order derivatives of $q$, it is clear that $q$ can be smoothen out in neighborhoods of $T_i$ such that 
\eqref{Qsubphi} holds in $M_a$. Hence $\varphi_{a,c}$ is a positive $Q$-subsolution in $M_a$ and continuous in $M$, 
with $\varphi_{a,c}=0$ in $M\setminus M_a$. Next we use the divergence theorem to show that $\varphi_{a,c}$ is a 
$Q$-subsolution in whole $M$. To this end, let $\eta\in C^{\infty}_{0}(M)$ be an arbitrary non-negative test function 
and let $U\Subset M$ be an open set such that $\spt\eta\subset U$ and that $\partial(M_a\cap U)$ is smooth. 
Since $\varphi_{a,c}=0$ in $M\setminus M_a$, $\eta=0$ in $M\setminus U$, and 
$\eta\diver\cA(\abs{\nabla\varphi_{a,c}}^{2})\nabla\varphi_{a,c}\ge 0$ pointwise in $M_a\cap U$, we obtain from the 
divergence theorem that
\begin{align*}
&\int_{M}\bigl\langle \cA\bigl(\abs{\nabla\varphi_{a,c}}^{2}\bigr)\nabla\varphi_{a,c},\nabla\eta\bigr\rangle dm 
=
\int_{M_a\cap U}\bigl\langle \cA\bigl(\abs{\nabla\varphi_{a,c}}^{2}\bigr)\nabla\varphi_{a,c},\nabla\eta\bigr\rangle dm \\
= &-
\int_{M_a\cap U}\eta\diver\cA(\abs{\nabla\varphi_{a,c}}^{2})\nabla\varphi_{a,c}\, dm
+\int_{\partial(M_a\cap U)}
\bigl\langle \eta\cA\bigl(\abs{\nabla\varphi_{a,c}}^{2}\bigr)\nabla\varphi_{a,c},\nu\bigr\rangle d\sigma\\
\le 
&\int_{\partial(M_a\cap U)}
\bigl\langle \cA\bigl(\abs{\nabla\varphi_{a,c}}^{2}\bigr)\nabla\varphi_{a,c},\eta\nu\bigr\rangle d\sigma,
\end{align*}
where $d\sigma$ is the (Riemannian) surface measure and $\nu$ is the unit outer normal vector field on 
$\partial(M_a\cap U)$. Furthermore,
\[
\eta\nu = -\tfrac{\eta\nabla\varphi_{a,c}}{\abs{\nabla\varphi_{a,c}}}
\] 
on $\partial(M_a\cap U)$, and therefore
\[
\int_{\partial(M_a\cap U)}
\bigl\langle \cA\bigl(\abs{\nabla\varphi_{a,c}}^{2}\bigr)\nabla\varphi_{a,c},\eta\nu\bigr\rangle d\sigma
= -\int_{\partial(M_a\cap U)}\eta\cA\bigl(\abs{\nabla\varphi_{a,c}}^{2}\bigr)\abs{\nabla\varphi_{a,c}}d\sigma
\le 0.
\]
We conclude that $\varphi_{a,c}$ is a $Q$-subsolution in the whole $M$.
Finally, 
\[
\int_{0}^{\infty}q_a(t)\, dt=\infty
\]
for all $a\in\R$ since $q_a(t)=\beta_0(t)-1/\cosh t$ for $t\ge T_3$,
\[
\int_{T_3}^{\infty}\beta_0(t)\, dt=\infty,
\]
and
\[
\int_{T_3}^{\infty}\frac{dt}{\cosh t}\le\int_{0}^{\infty}\frac{dt}{\cosh t}=\pi/2.
\]
Furthermore,
\[
\int_{0}^{r}q_a(t)\, dt\le \int_{0}^{r}\beta_0(t)\, dt +\int_{0}^{r}\frac{dt}{\cosh t}=: b_r<\infty
\]
independently of $a\in\R$. Hence the family $\{\varphi_{a,c}\}$ satisfies conditions (a), (b'), and (c) in 
Theorem~\ref{thm:main3}.

\section{Construction of $Q$-supersolutions $\psi_{a,c}$}\label{sec:construct-psi}
The construction of the family of continuous $Q$-supersolutions $\psi_{a,c},\ a\in\R,\ c>0,$ 
is similar to that in \cite{Bor} and \cite{H_ns}. It is based on the following theorem from 
e.g.~\cite[Theorem 4.3]{choi}:
\begin{Thm}\label{thm:choi4.3}
Let $N$ be an $n$-dimensional Cartan-Hadamard manifold with sectional curvatures $\le -1$. Let $\Omega\subset N$ be a 
domain with $C^{\infty}$-smooth boundary such that $\bar\Omega$ is convex. Then the distance function 
$\rho\colon N\setminus\bar\Omega\to (0,\infty),$ 
\[
\rho(x)=\dist(x,\bar\Omega),
\] 
is $C^{\infty}$ and 
\begin{equation}\label{laprho}
\Delta \rho\ge (n-1)\tanh\rho
\end{equation}
in $N\setminus\bar\Omega.$
 \end{Thm}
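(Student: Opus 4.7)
I would treat the regularity of $\rho$ and the Laplacian bound separately, by standard comparison-geometry arguments.

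For the $C^\infty$-smoothness of $\rho$, the key point is that each $x\in N\setminus\bar\Omega$ admits a unique foot point $\pi(x)\in\partial\Omega$. Since $N$ is Cartan-Hadamard, $t\mapsto d(x,\gamma(t))^2$ is strictly convex along any geodesic $\gamma$, so two distinct minimizers in $\bar\Omega$ would be contradicted by the midpoint of the geodesic segment joining them (which lies in $\bar\Omega$ by convexity) being strictly closer to $x$. The minimizing segment from $\pi(x)$ to $x$ is then orthogonal to $\partial\Omega$, and the smooth normal exponential map
\[
F\colon\partial\Omega\times(0,\infty)\to N\setminus\bar\Omega,\qquad F(y,t)=\exp_y(t\nu(y)),
\]
with $\nu$ the outward unit normal of $\partial\Omega$, is a bijection. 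Its differential is everywhere nonsingular by Jacobi field analysis: a variation field $J$ along a normal geodesic with $J(0)=X\in T_y\partial\Omega$ and $J'(0)=S_0 X$, where $S_0\ge 0$ is the outward shape operator (nonnegative by convexity), has $|J|^2$ convex and nondecreasing in nonpositive curvature, so $J$ cannot vanish on $(0,\infty)$. Inverting $F$ identifies $\rho$ with the projection onto the $t$-factor, hence $\rho\in C^\infty(N\setminus\bar\Omega)$, and $\nabla\rho$ coincides with the unit tangent field to the normal geodesics.

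For the Laplacian bound I would use the matrix Riccati equation
\[
S'(t)+S(t)^2+R(t)=0
\]
for the shape operator $S(t)$ of the equidistant hypersurface $\{\rho=t\}$ at $F(y,t)$ with respect to $\nabla\rho$, where $R(t)V=R(V,\nabla\rho)\nabla\rho$ is the tidal operator on $\nabla\rho^{\perp}$. The hypothesis $\sect\le -1$ gives $R(t)\le -I$ as symmetric operators, and convexity of $\bar\Omega$ yields $S(0)=S_0\ge 0$. The comparison model is $\hyp^n$, where the distance from a totally geodesic hyperplane has shape operator $(\tanh t)I$ solving $\sigma'+\sigma^2-1=0$ with $\sigma(0)=0$. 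A matrix Riccati comparison then produces $S(t)\ge(\tanh t)I$ on $\nabla\rho^{\perp}$: tracking the smallest eigenvalue $s(t)$ of $S(t)$ by means of a smoothly chosen unit eigenvector $e(t)$, one computes $s'(t)=\langle S'(t)e,e\rangle=-s(t)^2-\langle R(t)e,e\rangle\ge 1-s(t)^2$, and the scalar comparison with $\sigma=\tanh t$ (starting from $s(0)\ge 0$) gives $s(t)\ge\tanh t$.

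Taking the trace and using $\Hess\rho(\nabla\rho,\cdot)=0$, which follows from $|\nabla\rho|\equiv 1$, then yields
\[
\Delta\rho=\operatorname{tr}S(t)\ge (n-1)\tanh\rho,
\]
which is the required bound. I expect the only delicate step to be the matrix Riccati comparison, since $S(t)$ need not be simultaneously diagonalizable in $t$; this is handled either by the eigenvector differentiation argument above (carried out cleanly at simple eigenvalues and extended by perturbation) or, equivalently, by a Lipschitz viscosity comparison for $\lambda_{\min}(S(t))$.
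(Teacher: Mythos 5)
The paper does not prove this statement: it is quoted from Choi \cite[Theorem 4.3]{choi} and used as a black box in Section~\ref{sec:construct-psi} to build the supersolutions $\psi_{a,c}$. So there is no in-text argument to compare yours against, but your plan is the correct, standard comparison-geometry proof of Choi's result: uniqueness of the foot point from strict convexity of $t\mapsto d(x,\gamma(t))^2$ on a Hadamard manifold, nonvanishing of Jacobi fields with $J(0)=X$, $J'(0)=S_0X$, $S_0\ge 0$ (via $\tfrac{d^2}{dt^2}|J|^2\ge 0$ and $\tfrac{d}{dt}|J|^2(0)\ge 0$) to make the normal exponential map a local diffeomorphism, and the Riccati/Hessian comparison against the model $dt^2+\cosh^2 t\,ds^2_{\hyp^{n-1}}$ on $\hyp^n$ for \eqref{laprho}.

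Two points worth tightening. In the smoothness half you assert that $F(y,t)=\exp_y(t\nu(y))$ is a bijection onto $N\setminus\bar\Omega$ with $t$-coordinate equal to $\rho$, but this needs an argument that each outward normal geodesic remains minimizing to $\bar\Omega$ for all $t$: since $\rho=\dist(\cdot,\bar\Omega)$ is a convex function on a Hadamard manifold (distance to a convex set), $t\mapsto\rho(\gamma_y(t))$ is convex with value $0$ and one-sided slope $1$ at $t=0$, hence $\ge t$; combined with the trivial bound $\le t$ this forces $\rho(\gamma_y(t))=t$, which gives both surjectivity and injectivity. In the Laplacian half, note that the operator comparison $S(t)\ge(\tanh t)I$ is genuinely unavoidable here: because the hypothesis is the sectional bound $R(t)\le -I$ rather than a Ricci bound, the purely scalar trace--Riccati argument that yields the classical upper bound $\Delta\rho\le(n-1)\coth\rho$ under $\ric\ge -(n-1)$ does not transfer, since Cauchy--Schwarz $\operatorname{tr}\bigl(S^2\bigr)\ge(\operatorname{tr}S)^2/(n-1)$ cuts the wrong way for a \emph{lower} bound on $\operatorname{tr}S$. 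So the delicate step you flagged (matrix Riccati comparison for symmetric operators, via eigenvalue tracking or the Eschenburg--Heintze type theorem) really is the crux, and your sketch of it is sound.
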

Suppose then that $\bar\Omega\subset N$ is a convex set and $\rho=\dist(\cdot,\bar\Omega)$ is a distance function as in 
Theorem~\ref{thm:choi4.3}. Define a continuous function $v\colon N\to [0,c)$ by setting $v=0$ in $\bar\Omega$ and 
$v(x)=c\,\tanh\bigl(\delta\rho(x)\bigr)$ for $x\in N\setminus\bar\Omega$, where $c>0$ and $\delta=\delta(B_{0})$ is a positive 
constant depending only on the constant $B_{0}$ in \eqref{Bgrowth}. 
Then in $N\setminus\bar\Omega$ we have
\[
\nabla v=c\delta\cosh^{-2}(\delta\rho)\nabla\rho
\]
and
\[
\abs{\nabla v}=c\delta\cosh^{-2}(\delta\rho).
\]
To compute $Q[v]$, we first observe that 
\begin{align*}
\Hess v\left(\tfrac{\nabla v}{\abs{\nabla v}},\tfrac{\nabla v}{\abs{\nabla v}}\right)
&= \Hess v(\nabla\rho,\nabla\rho)
= \nabla\rho\langle\nabla v,\nabla\rho\rangle - (\nabla_{\nabla\rho}\nabla\rho)v\\
&= \nabla\rho\bigl(c\delta\cosh^{-2}(\delta\rho)\bigr)
= \frac{-2c\delta^2\tanh(\delta\rho)}{\cosh^2 (\delta\rho)}
\end{align*}
and that
\[
\Delta v=\diver\bigl(c\delta\cosh^{-2}(\delta\rho)\nabla\rho\bigr)
=c\delta\cosh^{-2}(\delta\rho)\bigl(\Delta\rho - 2\delta\tanh(\delta\rho)\bigr).
\]
Hence by  \eqref{Bgrowth}, \eqref{qhess}, and \eqref{laprho} we have
\begin{align*}
Q[v]&=\diver\cA(\abs{\nabla v}^{2})\nabla v\\
&=\cA(\abs{\nabla v}^{2})\left\lbrace \Delta v +
2\cB(\abs{\nabla v}^{2})\abs{\nabla v}^{2}\Hess v\left(\tfrac{\nabla v}{\abs{\nabla v}},\tfrac{\nabla v}{\abs{\nabla v}}\right)
\right\rbrace\\
&\ge \frac{c\delta\cA(\abs{\nabla v}^{2})}{\cosh^{2}(\delta\rho)}\bigl((n-1)\tanh\rho-2\delta(1+2B_{0})\tanh(\delta\rho)\bigr).
\end{align*}
Choosing $\delta=\min(1,\tfrac{1}{2(1+2B_{0})})$ yields
\[
\diver\cA(\abs{\nabla v}^{2})\nabla v \ge 0
\]
in $N\setminus\bar\Omega$. Hence the function $\psi=c-v$ is a continuous positive function in $N$, a $Q$-supersolution 
in $N\setminus\bar\Omega$, $\psi=c$ in $\bar\Omega$, and $\psi(x)\to 0$ as $\dist(x,\bar\Omega)\to\infty$. By a similar argument 
based on the divergence theorem as in the previous section, we conclude that $\psi$ is, in fact, a $Q$-supersolution in whole 
$N$.

Thus to construct the family $\{\psi_{a,c}\},\ a\in\R,\ c>0,$ it is enough to find appropriate convex subsets of $M$. 
This is done in \cite{Bor} as follows.
Denote by $\alpha_{a}$ any integral curve of $-\nabla_{\Theta}\Theta=gg_r'(R+\beta S)$ starting at 
$L(a)$.
Furthermore, denote by $P_a$ the surface obtained by rotating $\alpha_a$ around $L$ and let $V_a$ be the 
component of $M\setminus P_a$ containing points $L(s)$, with $s>a$. Observe that $P_a$ is also obtained by rotating integral 
curves of $R+\beta S$ starting at $L(a)$ around $L$.
It is proven in \cite[p. 235]{Bor} that $\bar V_a$ is convex for every $a\in\R$. 
Next we observe that, for each fixed $a\in\R,$ the set $M_a=\{x\in M\colon \varphi_a(x)>0\}$ is 
contained in $\bar V_{a-b}$ for some $b=b(a,B_{0})$. This is seen by comparing the (Fermi) $s$-coordinates
of points $(s'',r,\vartheta)$ and $(s',r,\vartheta)$ on integral curves $\alpha_{a-b}$ and 
$\gamma_{a,s},\ s\ge a,$ respectively. More precisely, $s'\ge s''$ for all such points 
$(s'',r,\vartheta)$ and $(s',r,\vartheta)$ if $b=b(a,B_{0})$ is large enough since 
$\beta_0(r)-q_a(r)=1/\cosh r$ for $r\ge T_3=T_3(a,B_{0})$ and 
$\int_0^{\infty}1/\cosh r\, dr=\pi/2<\infty.$ Finally, for each $a\in\R$ and $c>0$, let 
$\psi_{a,c}=c-v_{a,c}$, where $v_{a,c}=c\tanh(\delta\rho_a)$, where $\rho_a=\dist(\cdot,\bar V_{a-b})$ and 
$\delta=\tfrac{1}{2(1+2B_{0})}$. Then, by the discussion above, $\psi_{a,c}$ is a continuous 
positive $Q$-supersolution in $M$, $0\le\varphi_{a,c}\le\psi_{a,c}\le c$, $\psi_{a,c}=c$ in $\bar V_{a-b}$,
and $\lim_{y\to x}\psi_{a,c}(y)=0$ for all $y\in M(\infty)\setminus\{x_0\}$.

In conclusion, the families $\{\varphi_{a,c}\}$ and $\{\psi_{a,c}\}$ satisfy the conditions in Theorem~\ref{thm:main3}, and thus 
Theorems~\ref{thm:main0}, \ref{thm:main1}, \ref{thm:main2}, and \ref{thm:main3} are proven.

\end{document}